\begin{document}

\title{Cucker-Smale model with a bonding force and a singular interaction kernel}
\author{Jeongho Kim and Jan Peszek}

\newtheorem{theorem}{Theorem}[section]
\newtheorem{lemma}{Lemma}[section]
\newtheorem{corollary}{Corollary}[section]
\newtheorem{proposition}{Proposition}[section]
\newtheorem{remark}{Remark}[section]
\newtheorem{definition}{Definition}[section]

\renewcommand{\theequation}{\thesection.\arabic{equation}}
\renewcommand{\thetheorem}{\thesection.\arabic{theorem}}
\renewcommand{\thelemma}{\thesection.\arabic{lemma}}
\newcommand{\bbr}{\mathbb R}
\newcommand{\bbz}{\mathbb Z}
\newcommand{\bbn}{\mathbb N}
\newcommand{\bbs}{\mathbb S}
\newcommand{\bbp}{\mathbb P}
\newcommand{\ddiv}{\textrm{div}}
\newcommand{\bn}{\bf n}
\newcommand{\rr}[1]{\rho_{{#1}}}
\newcommand{\thh}{\theta}
\def\charf {\mbox{{\text 1}\kern-.24em {\text l}}}
\renewcommand{\arraystretch}{1.5}


\maketitle
\footnote{The work of J. Peszek was supported by the Polish MNiSW grant Mobilno\' s\' c Plus no. 1617/MOB/V/2017/0 and partially supported by the Polish MNiSW grant Iuventus Plus no. 0888/IP3/2016/74 and the work of J. Kim was supported by the German Research Foundation (DFG) under the project number IRTG 2235.}
\footnote{J. Kim was with the Department of mathematical sciences, Seoul National University, 1 Gwanak-ro, Gwanak-gu, Seoul, 08826 Republic of Korea (e-mail: jhkim206@snu.ac.kr, phone: +82 2 880 1491)}
\footnote{J. Peszek is with the Center for Scientific Computation and Mathematical Modeling (CSCAMM),
University of Maryland, 4146 CSIC Building no. 406, 8169 Paint Branch Drive, College Park, MD 20742-3289, USA (e-mail: j.peszek@mimuw.edu.pl, phone: 301-405-1646, fax: 301-314-6674)
and also with Institute of Mathematics of the Polish Academy of Sciences, Warszawa, Poland }
\footnote{This work has been submitted to the IEEE for possible publication. Copyright may be transferred without notice, after which this version may no longer be accessible}

\date{\today}

\begin{abstract}
We prove the lack of asymptotic collisions between particles following the Cucker-Smale flocking model with a bonding force and its simplification. Moreover, we prove that in the case of the CSB model with a singular communication weight, finite-in-time collisions are impossible. Consequently, we establish existence of the global-in-time minimal distance between the particles. Furthermore, we show that asymptotic distribution of particles is confined within a ball of a given radius.
\end{abstract}

\section{Introduction}\label{sec:1}
\setcounter{equation}{0}
Emergence of pattern formation is an ubiquitous phenomenon observed in the collective behavior of ensembles of
self-propelled particles, e.g., flocking of birds or herding of sheep. In this paper, we use the jargon ``{\it flocking}" to describe such a collective dynamics. More precisely, ``{\it flocking}" represents a phenomenon in which self-propelled particles organize into an ordered motion using only limited environmental
information and simple rules \cite{T-T}. Recently, several mathematical models on the flocking phenomena have appeared in the literature \cite{C-S2, J-K, V-C-B-C-S}. They have been extensively studied owing to their potential engineering applications in sensor networks, the formation control of robots and unmanned aerial vehicles, etc. \cite{L-P-L-S, P-L-S-G-P, P-E-G}. 
Among such models, the Cucker-Smale (C-S) model describes the dynamics of particles with all-to-all interaction to align their velocities \cite{C-S2, H-Liu, H-T}. The research on the C-S model branches in various directions that are based on the applicational character of the model and thus are often qualitative in nature. Such directions include collision-avoidance \cite{A-C-H-L, C-C-M-P, C-D}, asymptotic behavior and pattern formation \cite{C-F-R-T, P-E-G,T-B}. Furthermore, the issue of collision-avoidance sparks the study on the C-S model with a singular kernel \cite{C-C-M-P, M-P,P1, P2}. On the other hand, the dynamics of the original C-S model exhibits only the property of velocity alignment and, in particular, there is no information about asymptotic pattern formation. Thus to enforce specific pattern formation, additional forces have been implemented \cite{ A-C-H-L,H-K-P}. For further information we refer to \cite{C-F-T-V, C-H-L}.

Particularly in \cite{H-K-P}, the authors adjust the C-S model by introducing a {\it bonding force}, controlling the distances between the particles to obtain dynamics in which particles do not collide or disperse asymptotically (and, in a sense, are bonded with each other). This C-S model with bonding force (CSB model) is governed by the following system
\begin{align}\label{B-2}
\left\{
\begin{array}{clc}
\displaystyle\frac{dx_i}{dt} &= v_i,\quad x_i,v_i\in\bbr^d,\quad i=1,2,\cdots,N,\quad t>0,\\
\displaystyle\frac{dv_i}{dt} &= \displaystyle\frac{K_1}{N}\displaystyle\sum_{j=1}^N\psi(|x_j-x_i|) (v_j-v_i)\\
&+\displaystyle\frac{\tilde{K}}{N}\displaystyle\sum_{j=1}^N \displaystyle\frac{1}{2r^2_{ij}}\left[(v_i-v_j) \cdot (x_i-x_j)\right](x_j-x_i)\\
&+\displaystyle\frac{K_2}{N} \displaystyle\sum_{j=1}^N\displaystyle\frac{r_{ij}-2R}{2r_{ij}}(x_j-x_i),\qquad r_{ij}:=|x_i-x_j|,
\end{array}
\right.
\end{align}
where $N$ is the number of particles, $(x_i,v_i)$ denotes position and velocity of $i$th $d$-dimensional particle, constants $K_1$, $K_2$ and $\tilde{K}$ control the intensity of the interaction and, finally, constant $R$ influences the asymptotic distance between the particles. The communication weight $\psi:\mathbb{R}_+\to \mathbb{R}_+$ is generally a non-increasing, smooth function. The CSB model differs from the original Cucker-Smale model by the addition of the latter two terms in $\eqref{B-2}_2$, which together compose the bonding force.

At the first glance the bonding force in \eqref{B-2} forces an asymptotic pattern in which distance between the particles converges to $2R$. However, when the number of the particles is much larger than the dimension of the space, this pattern formation is physically impossible. Instead, we observe in numerical simulations, that the particles move towards an ``energy minimizing" configuration. When the numerical simulation for CSB system \eqref{B-2} is implemented, we find that the particles converge to a pattern, which is characterized by a uniform spread of the particles in a ball of radius $2R$ (see Figure \ref{Fig1}). In particular one observes the lack of asymptotic collisions between the particles, even though, until now, asymptotic collision-avoidance was not proven mathematically.

\noindent \textbf{Main goal.} The main goal of this paper is to introduce a simplification of the CSB model that admits a global-in-time minimal distance between the particles. The simplified CSB system reads as follows:

\begin{align}\label{B-3}
\left\{
\begin{array}{clc}
\displaystyle\frac{dx_i}{dt} &= v_i,\quad x_i,v_i\in\bbr^d,\quad i=1,2,\cdots, N,\quad t>0,\\
\displaystyle\frac{dv_i}{dt} &= \displaystyle\frac{K_1}{N}\displaystyle\sum_{j=1}^N\psi(|x_j-x_i|) (v_j-v_i)\\
&+\displaystyle\frac{K_2}{N} \displaystyle\sum_{j=1}^N\displaystyle\frac{r_{ij}-2R}{2r_{ij}}(x_j-x_i).
\end{array}
\right.
\end{align}

The main motivation for our research comes from the applications in robotic multi-agent systems. The issue of collision avoidance is widely studied from the engineering point of view (e.g. \cite{A-A-E}). Existence of a minimal distance between the agents seems especially  important for a safe operation of unmanned aerial vehicles. On the other hand simplification of the system itself may reduce its computational complexity.

The main mathematical contribution of our work is the proof of asymptotic collision-avoidance and asymptotic bound on position of the particles for the CSB system (original \eqref{B-2} and simplified \eqref{B-3}). It bridges the gap between numerical simulations and theoretical knowledge for the CSB system. Furthermore we expand the ideas introduced in \cite{C-C-M-P}, proving finite-time collision avoidance, provided that the communication weight $\psi$ is sufficiently singular.

In our considerations, we assume that $\psi$ is a singular communication weight of the form
\begin{align}\label{sing}
\psi(s) = s^{-\alpha}, \quad \alpha\geq 1.
\end{align}
However, most of our results remain true also in case of regular $\psi$. Additionally we do not apply the specific form of $\psi$ given by \eqref{sing} and our argumentation can be easily generalized to any $\psi$ that is not integrable near zero. Finally, let us note that our methods strongly rely on finiteness of the number of the particles $N$ and do not provide too much information on the kinetic or hydrodynamic limit as $N\to\infty$.

The remainder of this paper is organized as follows. In Section \ref{sec:2}, we review some of the standard facts, most notably the total energy estimate. Section \ref{mr} contains precise statement of the main results and proofs. Finally, Section \ref{sec:5} presents several numerical experiments supporting our analysis.\\

\noindent\textbf{Notation.} For given families of vectors $\{x_i\}_{i=1}^N$ and $\{v_i\}_{i=1}^N$ in $\bbr^d$, we define
\begin{align}\label{not}
x_{ij} := x_i-x_j,\quad v_{ij} := v_i-v_j,\quad r_{ij} := |x_{ij}|,
\end{align}
where $|\cdot|$ denotes the standard $\ell_2$-norm in $\mathbb{R}^d$. Furthermore, for the system of $N$ particles, we use the following abbreviated notation
\begin{align*}
x:=(x_1,...,x_N),\quad v:=(v_1,...,v_N), \quad\mbox{where}\quad x_i,v_i\in \bbr^d.
\end{align*}

Finally, we use the generic harmless constant $C>0$ if the precise control of constants is not beneficial.

\section{Preliminaries}\label{sec:2}
\setcounter{equation}{0}

We begin our considerations by presenting the basic conservation law, energy estimate and a uniform bound on the relative position $r_{ij}=|x_i-x_j|$ for \eqref{B-3}. Since the CSB system \eqref{B-3} is Galilean invariant, we assume, without a loss of generality, that
\begin{align*}
\sum_{i=1}^Nx_i^0 = 0\quad \mbox{and}\quad \sum_{i=1}^Nv_i^0 = 0.
\end{align*}
Then, thanks to the anti-symmetric property of the right-hand side of \eqref{B-3}$_2$, it is easy to see that
\begin{align}\label{vel0}
\frac{d}{dt}\left(\sum_{i=1}^Nx_i\right)=\frac{d}{dt}\left(\sum_{i=1}^Nv_i\right)=0,\quad \mbox{and hence}\nonumber\\
\sum_{i=1}^Nx_i(t) = \sum_{i=1}^Nv_i(t) = 0,\quad \mbox{for all}\ t>0.
\end{align}

Let us present the basic energy estimate for system \eqref{B-3}, with kinetic, potential and total energy defined by
\begin{align*}
{\mathcal E}_k:=\frac{1}{2}\sum_{i=1}^N |v_i|^2,\quad {\mathcal E}_p:=\frac{K_2}{8N}\sum_{i,j=1}^N(r_{ij}-2R)^2,\quad {\mathcal E}:={\mathcal E}_k+{\mathcal E}_p.
\end{align*}
\begin{proposition}\label{P2.1}
Let $(x,v)$ be a smooth solution to system \eqref{B-3} subjected to initial data $(x^0,v^0)$. Then the total energy ${\mathcal E}$ is a non-increasing function with respect to time and the relative distance between the particles is uniformly bounded. More precisely, we have
\begin{align*}
&(1)\quad \frac{d\mathcal{E}(t)}{dt}\le -\frac{K_1}{2N}\sum_{i,j=1}^N\psi(|x_i-x_j|)|v_i-v_j|^2,\\
&(2)\quad \sup_{t\ge0}\sup_{i,j}|x_i(t)-x_j(t)|<2R+\sqrt{\frac{8N{\mathcal E}(0)}{K_2}}=:d_M.
\end{align*}
\end{proposition}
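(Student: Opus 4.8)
The plan is to handle the two parts in sequence: part (1) by a direct differentiation of the energy along the flow, and part (2) as an immediate consequence of the monotonicity obtained in (1).

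For part (1), I would differentiate $\mathcal{E}_k$ and $\mathcal{E}_p$ separately and show that the bonding-force contributions cancel, leaving only the dissipative alignment term. Since $\frac{d\mathcal{E}_k}{dt} = \sum_i v_i\cdot\dot v_i$, substituting $\eqref{B-3}_2$ splits the kinetic dissipation into an alignment part and a bonding part. The alignment part is treated by the standard symmetrization: because $\psi(|x_{ij}|)$ is symmetric in $i,j$, relabeling indices and averaging turns $\frac{K_1}{N}\sum_{i,j}\psi(|x_{ij}|)\,v_i\cdot(v_j-v_i)$ into $-\frac{K_1}{2N}\sum_{i,j}\psi(|x_{ij}|)|v_{ij}|^2$, exactly the claimed right-hand side.

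The crux is to show that the bonding part of $\frac{d\mathcal{E}_k}{dt}$ equals $-\frac{d\mathcal{E}_p}{dt}$. I would compute $\frac{d\mathcal{E}_p}{dt}$ via $\frac{dr_{ij}}{dt} = \frac{x_{ij}\cdot v_{ij}}{r_{ij}}$, which gives $\frac{K_2}{4N}\sum_{i,j}\frac{r_{ij}-2R}{r_{ij}}\,x_{ij}\cdot v_{ij}$, and then symmetrize the bonding part of $\frac{d\mathcal{E}_k}{dt}$ the same way (its coefficient $\frac{r_{ij}-2R}{2r_{ij}}$ is again symmetric) to obtain the negative of this expression. The two cancel, so $\frac{d\mathcal{E}}{dt} = -\frac{K_1}{2N}\sum_{i,j}\psi(|x_{ij}|)|v_{ij}|^2 \le 0$, which is in fact sharper than the asserted inequality. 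This cancellation is the step I expect to demand the most care, since one must correctly track the factors of $2$ and the sign hidden in $x_j-x_i$ versus $x_{ij}$. I would also note that differentiating $r_{ij}$ presupposes $r_{ij}>0$; this is licit precisely because along a smooth solution the distances stay positive (a smooth solution cannot reach a collision when $\psi$ is singular), and that the whole dissipation structure relies only on $\psi\ge0$, not on the explicit form $\eqref{sing}$.

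For part (2), I would exploit the monotonicity just proved. Since $\mathcal{E}_k\ge0$, part (1) gives $\mathcal{E}_p(t)\le\mathcal{E}(t)\le\mathcal{E}(0)$ for all $t\ge0$, that is $\frac{K_2}{8N}\sum_{i,j}(r_{ij}-2R)^2\le\mathcal{E}(0)$. Fixing an arbitrary pair and discarding the remaining nonnegative summands yields $(r_{ij}-2R)^2\le\frac{8N\mathcal{E}(0)}{K_2}$, hence $r_{ij}\le 2R+\sqrt{8N\mathcal{E}(0)/K_2}=d_M$. The strict inequality follows since the diagonal terms $i=j$ already contribute $4NR^2>0$ to the sum, forcing any single off-diagonal term to remain strictly below $\frac{8N\mathcal{E}(0)}{K_2}$, and therefore $r_{ij}(t)<d_M$ uniformly in $t$ and in $i,j$.
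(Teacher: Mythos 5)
Your proposal is correct and follows essentially the same route as the paper, which (citing Proposition 1 of the bonding-force paper of Park--Kim--Ha) sketches exactly this argument: differentiate $\mathcal{E}_k$ and $\mathcal{E}_p$, symmetrize so the bonding contributions cancel against $\frac{d\mathcal{E}_p}{dt}$ leaving only the dissipative alignment term, and deduce (2) from $\mathcal{E}_p(t)\le\mathcal{E}(t)\le\mathcal{E}(0)$. Your additional observation that the diagonal terms contribute $4NR^2$ to the potential sum, which upgrades $r_{ij}\le d_M$ to the strict uniform bound $r_{ij}\le 2R+\sqrt{8N\mathcal{E}(0)/K_2-4NR^2}<d_M$, is a valid detail consistent with the paper's convention of summing over all pairs including $i=j$, and it correctly accounts for the strict inequality that the paper asserts but does not spell out.
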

\begin{proof}
The proof is a simplification of the proof of Proposition 1 from \cite{H-K-P} and thus we omit the details. Assertion (1) is obtained through a direct calculation of derivatives of ${\mathcal E}_k$ and ${\mathcal E}_p$ and symmetrization. Then, (2) follows by a direct application of the energy estimate ${\mathcal E}_p\leq {\mathcal E}\leq {\mathcal E}(0)$.
\end{proof}
\begin{remark}\rm
Proposition \ref{P2.1} is valid for system \eqref{B-2}. It also holds regardless of whether the communication weight is singular or regular, since the analysis is based only on the structure of the system.
\end{remark}
\section{Main results}\label{mr}
\setcounter{equation}{0}
In this section, we present our main results. Our results apply to two (four including the original CSB system \eqref{B-2}) frameworks:\\

\noindent $\bullet$~({\bf F}$_1$): Simplified CSB system \eqref{B-3} with a singular communication weight \eqref{sing};\\

\noindent $\bullet$~({\bf F}$_2$): Simplified CSB system \eqref{B-3} with regular communication weight e.g. $\psi(s)=(1+s)^{-\alpha}$ with $\alpha>0$. \\

Both frameworks are considered in $\bbr^d$ on the time interval $[0,\infty)$ subjected to the initial data $(x^0,v^0)$ that are non-collisional, i.e.,
\begin{align*}
x_i^0\neq x_j^0,\qquad\mbox{for all}\ 1\leq i\neq j\leq N.
\end{align*}

\begin{remark}\rm~\\
All of our results remain true also in the case of the original CSB model \eqref{B-2}. The only difference is that one needs to deal with the middle term on the right-hand side of \eqref{B-2} which is non-singular and it does not produce any substantial difficulty.
\end{remark}

\subsection{Collision-avoidance and existence}
\begin{proposition}\label{exi}
Both frameworks {\bf F}$_1$ and {\bf F}$_2$ admit a unique classical solution. In particular, in the singular framework {\bf F}$_1$ particles do not collide in any finite time.
\end{proposition}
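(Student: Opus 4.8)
The plan is to treat \eqref{B-3} as an ODE on the collision-free open set $\Omega=\{(x,v):x_i\neq x_j\ \text{for}\ i\neq j\}$ and to reduce global solvability to the exclusion of finite-time collisions. First I would observe that on $\Omega$ the right-hand side of \eqref{B-3}$_2$ is smooth, hence locally Lipschitz: in both frameworks the bonding term is smooth because $(x_j-x_i)/r_{ij}$ and $(r_{ij}-2R)/r_{ij}$ are smooth off the diagonal, while the alignment term is smooth since $\psi(s)=s^{-\alpha}$ (framework $\mathbf{F}_1$) and $\psi(s)=(1+s)^{-\alpha}$ (framework $\mathbf{F}_2$) are smooth on the relevant range. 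The Cauchy--Lipschitz theorem then gives a unique maximal classical solution on some interval $[0,T_{\max})$, and the standard continuation principle says that if $T_{\max}<\infty$ the solution must leave every compact subset of $\Omega$.

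Next I would invoke Proposition \ref{P2.1} to rule out every escape mechanism except a collision. The energy bound $\mathcal{E}(t)\le\mathcal{E}(0)$ gives $|v_i(t)|\le\sqrt{2\mathcal{E}(0)}$ for all $i$, so positions grow at most linearly and cannot blow up in finite time, while Proposition \ref{P2.1}(2) bounds all mutual distances by $d_M$. Consequently $T_{\max}<\infty$ forces $\min_{i\neq j}r_{ij}(t)\to0$ as $t\uparrow T_{\max}$. For the regular framework $\mathbf{F}_2$ the weight $\psi$ is bounded, so this scheme together with the a priori bounds already yields the unique classical solution, the only possible obstruction being a collision; since collision-avoidance is not asserted in the regular case, the substantive task is to exclude finite-time collisions in $\mathbf{F}_1$, which is exactly the ``in particular'' clause.

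For $\mathbf{F}_1$ the governing mechanism is that the singular self-interaction of an approaching pair dissipates their relative velocity strongly enough to forbid contact. I would fix a pair $(i,j)$, write $w=v_{ij}$, $r=r_{ij}$, and isolate in $\dot w=\dot v_i-\dot v_j$ the two contributions coming from the pair itself, which equal $-\tfrac{2K_1}{N}\psi(r)\,w$; the remaining interaction terms (the indices $k\neq i,j$) together with the bounded bonding force I collect into an error $E_{ij}$. Using $|\dot r|\le|w|$ and the antiderivative $\Psi$ of $\psi$, for which $\Psi(r)\to-\infty$ as $r\to0^+$ precisely because $\alpha\ge1$ makes $\psi$ non-integrable at the origin, a short computation yields the exact inequality
\[
\frac{d}{dt}\Big(|v_{ij}|-\tfrac{2K_1}{N}\Psi(r_{ij})\Big)\le E_{ij},
\]
valid for every pair at all times. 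Since the velocities are bounded while $-\Psi(r_{ij})\to+\infty$ at a collision, integrating this inequality shows that a finite-time collision is impossible as soon as $\int_0^{T_{\max}}E_{ij}\,dt<\infty$.

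The hard part is exactly this last integrability. The error $E_{ij}$ contains the third-particle weights $\psi(r_{ik})$ and $\psi(r_{jk})$, which themselves blow up if another particle approaches $i$ or $j$ simultaneously, so the pairwise estimates are coupled and cannot be closed in isolation. The naive remedy of summing over all pairs fails, because for $\alpha\ge1$ the weight $\psi$ is a super-linear function of $-\Psi$, whence the resulting differential inequality for $\sum_{i<j}(-\Psi(r_{ij}))$ has a super-linear right-hand side and may blow up in finite time. The resolution, which I would carry out along and slightly beyond the lines of \cite{C-C-M-P}, is to run the estimate on the minimal distance $\min_{i\neq j}r_{ij}$ (equivalently on the maximal functional), where the dominant singular damping of the closest pair controls the third-particle contributions, and to close the argument by a continuity/bootstrap showing that no group of particles can collide at once. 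This multi-particle bookkeeping, rather than the single-pair mechanism, is the main obstacle.
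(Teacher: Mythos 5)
Your first two paragraphs are sound and coincide with the skeleton that the paper leaves implicit: local well-posedness on the collision-free set $\Omega$, the continuation principle, and the energy bounds of Proposition \ref{P2.1} reducing everything to the exclusion of $\min_{i\neq j}r_{ij}\to 0$ in finite time; your pairwise inequality for $|v_{ij}|-\tfrac{2K_1}{N}\Psi(r_{ij})$ is also correct as far as it goes. The genuine gap is that your argument stops exactly at the decisive step: you name the multi-particle bookkeeping as ``the main obstacle'' and leave it open, and the repair you sketch --- an estimate on $\min_{i\neq j}r_{ij}$ closed by a continuity/bootstrap argument --- is not how \cite{C-C-M-P} argues and would not close. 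If $(i,j)$ realizes the minimum $r_{\min}$, the third-particle weights $\psi(r_{ik}),\psi(r_{jk})$ in $E_{ij}$ are bounded only by $\psi(r_{\min})$, i.e.\ they enter at exactly the same order as the pair's own damping $-\tfrac{2K_1}{N}\psi(r_{\min})|v_{ij}|$ and carry no sign, so the closest pair's dissipation does not dominate them; moreover $\min_{i\neq j}r_{ij}$ is nonsmooth at switching times, and in this particular model Lemma \ref{comp} makes the situation extreme: any collision is necessarily a total collapse, so near a putative collision \emph{all} pairwise distances are comparable and every pair is singular simultaneously.

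What actually closes the argument --- and what the paper invokes via Theorem 2.1 of \cite{C-C-M-P} --- is aggregation over the colliding cluster combined with a sign observation, not a minimal-pair estimate. Suppose a collision occurs at time $t_0$; let $A$ be a set of particles reaching a common point at $t_0$ and $B$ the rest (in the present model Lemma \ref{comp} even allows one to take $A=\{1,\dots,N\}$ and $B=\emptyset$), and set $\|x\|_A^2:=\tfrac12\sum_{i,j\in A}r_{ij}^2$ and $\|v\|_A^2:=\tfrac12\sum_{i,j\in A}|v_{ij}|^2$. Differentiating $\|v\|_A^2$ and symmetrizing in $i,j\in A$ turns the \emph{totality} of singular interactions internal to $A$ into the single nonpositive dissipation $-\tfrac{K_1|A|}{N}\sum_{i,j\in A}\psi(r_{ij})|v_{ij}|^2$; inside the cluster there are no signed singular error terms left, which is precisely what defeats the coupling you worried about. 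The interactions between $A$ and $B$ are uniformly bounded near $t_0$ (particles of $B$ stay at positive distance from the collision point), and the bonding force is bounded thanks to Proposition \ref{P2.1}(2); absorbing the bonding force into this bounded $A$--$B$ error is the paper's sole modification of \cite{C-C-M-P}. Since $r_{ij}\le\|x\|_A$ for $i,j\in A$ and $\psi$ is non-increasing, one obtains, up to constants,
\[
\frac{d}{dt}\|v\|_A\le -c\,\psi\big(\|x\|_A\big)\,\|v\|_A + C,\qquad \left|\frac{d}{dt}\|x\|_A\right|\le \|v\|_A,
\]
so the cluster-level analogue of your functional, $\tfrac1c\|v\|_A-\Psi(\|x\|_A)$, grows at most linearly in time; since $\Psi(s)\to-\infty$ as $s\to0^+$ (this is where $\alpha\ge1$, i.e.\ non-integrability of $\psi$ at zero, enters), this contradicts $\|x\|_A(t)\to0$ as $t\to t_0^-$. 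With your minimal-distance bootstrap replaced by this cluster aggregation, your outline becomes the paper's proof.
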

\begin{proof}
The existence in framework {\bf F}$_1$ comes directly from the collision avoidance, since the system is regular outside of times of collision. The overall proof of collision avoidance is almost the same as the proof of Theorem 2.1 from \cite{C-C-M-P}. The main difference comes from the bonding force term. The proof in \cite{C-C-M-P} is based on dividing the particles into two groups: Group $A$ of particles that collide with each other and Group $B$ of the remaining particles. Then the singular interactions within $A$ overwhelm the bounded interactions between $A$ and $B$. In the case of the CSB model we simply put the bounded influence of the bonding force together with the interactions between $A$ and $B$, which are dominated by the singular interactions within $A$. The existence in framework {\bf F}$_2$ can be proved by using standard ODE theory.
\end{proof}

\subsection{Asymptotic decay of the kinetic energy}

\begin{proposition}\label{kinetdec}
In both frameworks {\bf F}$_1$ and {\bf F}$_2$, the kinetic energy decays to 0:
\[{\mathcal E}_k(t)\to 0,\quad \mbox{as}\quad t\to\infty.\]
\end{proposition}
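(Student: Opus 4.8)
The plan is to combine the energy dissipation of Proposition \ref{P2.1} with a Barbalat-type argument. First I would integrate the differential inequality in Proposition \ref{P2.1}(1) over $[0,\infty)$ to obtain
\[
\frac{K_1}{2N}\int_0^\infty\sum_{i,j=1}^N\psi(|x_i-x_j|)\,|v_i-v_j|^2\,dt \;\le\; {\mathcal E}(0)-\lim_{t\to\infty}{\mathcal E}(t)\;<\;\infty,
\]
where the limit exists because ${\mathcal E}$ is non-increasing and bounded below by $0$. Since Proposition \ref{P2.1}(2) bounds every relative distance by $d_M$ and $\psi$ is non-increasing, we have $\psi(|x_i-x_j|)\ge\psi(d_M)>0$ in both frameworks {\bf F}$_1$ and {\bf F}$_2$, so the estimate above yields $\int_0^\infty\sum_{i,j}|v_i-v_j|^2\,dt<\infty$. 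Using the zero-momentum identity \eqref{vel0}, namely $\sum_{i,j}|v_i-v_j|^2 = 2N\sum_i|v_i|^2-2\left|\sum_i v_i\right|^2 = 4N\,{\mathcal E}_k$, this is exactly $\int_0^\infty{\mathcal E}_k(t)\,dt<\infty$.

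It remains to upgrade integrability of ${\mathcal E}_k$ to decay. I would use the following elementary fact: if $f\ge0$, $f\in L^1([0,\infty))$ and $f'\le C$ for some constant $C$, then $f(t)\to0$; indeed, if $f(t_n)\ge\varepsilon$ along some $t_n\to\infty$, the bound $f'\le C$ forces $f(s)\ge f(t_n)-C(t_n-s)\ge\varepsilon/2$ for $s$ in an interval of fixed length $\varepsilon/(2C)$ to the left of each $t_n$, which (after passing to a subsequence making these intervals disjoint) contradicts integrability. Hence it suffices to bound $\tfrac{d}{dt}{\mathcal E}_k$ from above. Differentiating ${\mathcal E}_k$ and inserting the velocity dynamics of \eqref{B-3}, together with the same symmetrization that produced Proposition \ref{P2.1}(1), I would write
\[
\frac{d{\mathcal E}_k}{dt} = -\frac{K_1}{2N}\sum_{i,j=1}^N\psi(|x_i-x_j|)\,|v_i-v_j|^2 \;+\; \frac{K_2}{N}\sum_{i,j=1}^N\frac{r_{ij}-2R}{2r_{ij}}\,v_i\cdot(x_j-x_i).
\]

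The first term is non-positive, hence harmless for an upper bound. This is the crucial point: it is precisely this term that carries the singularity of $\psi$ and could otherwise blow up, but it enters with the favorable sign. For the bonding term I would exploit the cancellation $|v_i\cdot(x_j-x_i)|\le|v_i|\,r_{ij}$, which removes the factor $r_{ij}$ in the denominator and leaves $\tfrac{K_2}{2N}\sum_{i,j}|r_{ij}-2R|\,|v_i|$. This is uniformly bounded in time, since ${\mathcal E}_k\le{\mathcal E}(0)$ gives $|v_i|\le\sqrt{2{\mathcal E}(0)}$ and Proposition \ref{P2.1}(2) gives $|r_{ij}-2R|\le d_M+2R$. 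Thus $\tfrac{d}{dt}{\mathcal E}_k\le C$, and the elementary fact above yields ${\mathcal E}_k(t)\to0$. The same constants serve both frameworks, so no separate treatment is required, and the argument transfers verbatim to the original system \eqref{B-2} since its extra middle term is non-singular.

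The main obstacle is the apparent incompatibility of a singular kernel with the requirement of a controlled derivative: a two-sided bound on $\tfrac{d}{dt}{\mathcal E}_k$ is simply false, because the dissipation term can be arbitrarily large and negative when two particles are close. The resolution is to notice that only a one-sided (upper) bound is needed for the Barbalat-type conclusion, and that the singular contribution sits on the correct side; the only genuinely singular term one must estimate is the bonding force, whose $1/r_{ij}$ factor is exactly cancelled by the geometric factor $x_j-x_i$.
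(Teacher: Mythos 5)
Your proposal is correct, and its first half coincides with the paper's proof: both integrate the dissipation inequality of Proposition \ref{P2.1}(1), use the uniform distance bound of Proposition \ref{P2.1}(2) to get $\psi(r_{ij})\ge\psi(d_M)>0$, and conclude $\int_0^\infty{\mathcal E}_k\,dt<\infty$ via the zero-momentum identity. The two arguments diverge in how integrability is upgraded to decay. The paper proves that $\|v\|^2=2{\mathcal E}_k$ is \emph{uniformly continuous} by splitting its derivative into the dissipation $P$ (whose primitive is absolutely continuous, since $P\ge 0$ is integrable by the energy estimate) and the bonding term $I$ (bounded via the same Cauchy--Schwarz cancellation you use, cf.\ \eqref{D-7}), then invokes the standard ``uniformly continuous $+$ integrable $\Rightarrow$ decays'' fact. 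You instead use a one-sided Barbalat lemma ($f\ge0$, $f\in L^1$, $f'\le C$ implies $f\to0$), discarding the dissipation term purely on sign grounds. Both mechanisms neutralize the potentially singular term, but differently: the paper through its \emph{integrability}, you through its \emph{sign}. Your route is slightly more elementary and more robust --- it would survive even if the dissipation were not known to be integrable, and it makes explicit the (correct) observation that a two-sided pointwise bound on $\frac{d}{dt}{\mathcal E}_k$ is unavailable in the singular framework {\bf F}$_1$ and also unnecessary. The paper's route yields the marginally stronger structural conclusion that $\|v\|^2$ is uniformly continuous. Your auxiliary lemma is correctly stated and proved (the intervals of fixed length $\varepsilon/(2C)$ to the left of the $t_n$, made disjoint along a subsequence, do contradict $f\in L^1$), and it applies here since the solutions are classical by Proposition \ref{exi}, so ${\mathcal E}_k$ is $C^1$; the derivative computation and the estimate $|v_i\cdot(x_j-x_i)|\le|v_i|\,r_{ij}$, $|r_{ij}-2R|\le d_M+2R$, $|v_i|\le\sqrt{2{\mathcal E}(0)}$ are all sound.
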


\begin{proof}
The following proof is valid in both cases {\bf F}$_1$ and {\bf F}$_2$. We define $\psi_m:=\psi(d_M)$ (with $d_M$ from Proposition \ref{P2.1}) and
{\small
\[\|v\|^2:=\sum_{i=1}^N |v_i|^2=2\mathcal{E}_k \quad\left(=\frac{1}{2N}\sum_{i,j=1}^N|v_i-v_j|^2\quad \mbox{due to}\ \eqref{vel0}\right).\]
}
Then, from Proposition \ref{P2.1}(1), we have
\begin{align}\label{D-4}
\frac{d\mathcal{E}(t)}{dt}\leq -\frac{K_1\psi_m}{2N}\sum_{i,j=1}^N|v_i-v_j|^2 = -K_1\psi_m\|v\|^2,
\end{align}
and thus
\begin{align}\label{D-5}
K_1\psi_m\int_0^\infty \|v(t)\|^2\,dt \leq \mathcal{E}(0).
\end{align}
Thus $\|v(t)\|\to 0$ up to a subsequence. To conclude that actually $\|v(t)\|\to 0$ we require information on uniform-in-time regularity of $\|v\|$. Therefore, we aim to prove that $\|v\|^2$ is uniformly continuous. We multiply $v_i$ to \eqref{B-3}$_2$, sum over all the indices and symmetrize to obtain
\[\frac{1}{2}\frac{d}{dt}\|v\|^2 = -\underbrace{\frac{K_1}{2N}\sum_{i,j=1}^N\psi(r_{ij})|v_{ij}|^2}_{=:P}-\underbrace{\frac{K_2}{4N}\sum_{i,j=1}^N \frac{r_{ij}-2R}{r_{ij}}v_{ji}\cdot x_{ji}}_{=:I}.\]
In the above equation, we recall notation \eqref{not}. By \eqref{D-4} it leads to
\begin{align}\label{D-6}
\frac{1}{2}\frac{d}{dt}\|v\|^2\leq -K_1\psi_m\|v\|^2 + |I|.
\end{align}

Recall that our aim is to prove that $\|v\|^2$ is uniformly continuous. We already know that a part of its derivative $P$ is integrable due to \eqref{D-4} and \eqref{D-5}. Next we show that $I$ is bounded. By Cauchy-Schwarz inequality and boundedness of the relative distance (see Proposition \ref{P2.1}), we have
\begin{equation}\label{D-7}
|I|\leq \frac{K_2}{4N}\sum_{i,j=1}^N|r_{ij}-2R||v_i-v_j| \leq \sqrt{K_2\mathcal{E}(0)} \|v\|.
\end{equation}

However, we also know that $\|v\|= \sqrt{2{\mathcal E}_k}\leq \sqrt{2{\mathcal E}(0)}$. Boundedness of $\|v\|$, together with \eqref{D-7}, implies that $I$ is bounded on $[0,\infty)$. Now we come back to \eqref{D-6} to see that the derivative of $\|v\|^2$ is a sum of an integrable function $P$ and a bounded function $I$. Therefore
\begin{align*}
\|v(t)\|^2 = \|v(0)\|^2-2\underbrace{\int_0^tP\,ds}_{f_1} -2\underbrace{\int_0^t Ids}_{f_2},
\end{align*}
where $f_1$ is absolutely continuous (since its derivative is integrable on $[0,\infty)$) and $f_2$ is Lipschitz continuous. Both absolutely continuous and Lipschitz continuous functions are uniformly continuous and thus $\|v\|^2$ is uniformly continuous. Since $\|v\|^2$ is also integrable by \eqref{D-5}, we conclude that $\|v\|\to 0$ as $t\to \infty$.
\end{proof}

\begin{remark}\rm\label{R4.2}
We note here that the decay of the velocity does not directly imply the convergence of the system toward an equilibrium. If the decay of the velocity is sufficiently slow, the position may not converge, even though the velocity converges to 0. Consider for example $x(t)=\log t$ with $v=\frac{dx}{dt}=\frac{1}{t}$. Although we do not have analytically rigorous proof of convergence of position, we provide numerical evidence of convergence of position in Section \ref{sec:5}.
\end{remark}

\subsection{Global minimal distance between particles}

Next we aim to prove the strict positivity of asymptotic inter-particle distance.
\begin{proposition}\label{anoncoll}
In both frameworks {\bf F}$_1$ and {\bf F}$_2$ there exists $T>0$ and $\rho>0$ such that
\begin{align*}
|x_i(t)-x_j(t)|\geq\rho\quad\mbox{for all}\ t\geq T, \quad i,j=1,...,N.
\end{align*} 
\end{proposition}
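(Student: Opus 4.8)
The plan is to show that, past a sufficiently large time, the repulsive part of the bonding force prevents any pair of particles from approaching each other. Since Proposition~\ref{kinetdec} gives $\|v(t)\|\to 0$, I would first fix $T_0>0$ so that the relative velocities $|v_{ij}(t)|$ are as small as desired for $t\ge T_0$; together with the uniform bound $r_{ij}\le d_M$ from Proposition~\ref{P2.1}, this places us in a quasi-static regime in which the bonding force dominates. The target is then a differential-inequality (trapping) argument for the scalar distances $r_{ij}$.

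The core computation is the second-order dynamics of a single relative distance. Writing $\dot r_{ij}=\frac{x_{ij}\cdot v_{ij}}{r_{ij}}$ and differentiating once more, the Cauchy--Schwarz inequality $(x_{ij}\cdot v_{ij})^2\le r_{ij}^2|v_{ij}|^2$ yields
\begin{align*}
\ddot r_{ij}\ \ge\ \frac{x_{ij}\cdot(\dot v_i-\dot v_j)}{r_{ij}}.
\end{align*}
I would then isolate, inside $x_{ij}\cdot(\dot v_i-\dot v_j)$, the mutual bonding interaction between $i$ and $j$, which equals $-\frac{K_2}{N}(r_{ij}-2R)\,r_{ij}=\frac{K_2}{N}(2R-r_{ij})\,r_{ij}$ and hence contributes the strictly positive amount $\frac{K_2}{N}(2R-r_{ij})\ge\frac{K_2R}{N}$ to $\ddot r_{ij}$ whenever $r_{ij}\le R$. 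This is precisely the outward push responsible for asymptotic collision-avoidance.

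It remains to show that the other contributions are negligible when $r_{ij}$ and the velocities are small. The alignment interactions all carry a velocity difference, which is small for $t\ge T_0$; moreover the mutual alignment term is proportional to $\dot r_{ij}$ and so vanishes at critical points of $r_{ij}$. The bonding forces produced by a third particle $k$ enter as a difference of two nearly equal vectors evaluated at the almost-coincident points $x_i$ and $x_j$, hence are of order $r_{ij}$ and vanish as $r_{ij}\to 0$. Granting these estimates, one obtains $\rho_0\in(0,R]$ and a choice of $T_0$ so that $\{t\ge T_0,\ r_{ij}(t)\le\rho_0\}$ forces $\ddot r_{ij}\ge c_0>0$ while $|\dot r_{ij}|$ stays small; a standard comparison argument for the convex, small-slope scalar function $r_{ij}$ then shows it cannot converge into that region. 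Choosing $\rho$ slightly below $\rho_0$ and taking the minimum over the finitely many pairs finishes the proof.

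The main obstacle is the control of the third-particle contributions when several particles cluster together: then the third-party bonding forces no longer obviously cancel and the singular kernel $\psi(s)=s^{-\alpha}$ produces large alignment terms. I expect to handle this as in the finite-time collision-avoidance argument of \cite{C-C-M-P}, by grouping the mutually close particles and observing that the singular alignment forbids internal collisions while the bonding force produces a net outward acceleration on the whole cluster, the interactions with distant particles being bounded and dominated. Making the comparison rigorous for the non-smooth quantity $\min_{i\ne j}r_{ij}$, by tracking the pair that realizes the minimum and using one-sided derivatives, is the remaining technical point.
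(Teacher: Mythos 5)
Your pairwise trapping scheme breaks down exactly at the point you yourself flag, and the fallback you propose does not repair it. The claim that the third-particle bonding terms are of order $r_{ij}$ is false in the only regime that matters: writing $\frac{r_{ki}-2R}{2r_{ki}}(x_k-x_i)=\frac12(x_k-x_i)-R\,\frac{x_k-x_i}{r_{ki}}$, the unit-vector part is not Lipschitz near $x_k$, and by the comparability principle (Lemma \ref{comp}, observation $({\mathcal A})$ of the paper) a near-collision of one pair forces \emph{all} mutual distances to be comparable to $r_{ij}$; then $\bigl|\frac{x_k-x_i}{r_{ki}}-\frac{x_k-x_j}{r_{kj}}\bigr|$ is $O(1)$, so each of the $N-2$ third particles can contribute up to $\frac{2K_2R}{N}$ to $\ddot r_{ij}$, the same size as (and in aggregate overwhelming) your good mutual term $\frac{K_2}{N}(2R-r_{ij})\le\frac{2K_2R}{N}$. (There is a one-sided rescue you do not invoke: the dangerous part is the gradient of the convex map $x\mapsto R|x-x_k|$, whose monotonicity gives $\frac{x_{ij}}{r_{ij}}\cdot\bigl(F^{(k)}_i-F^{(k)}_j\bigr)\ge-\frac{K_2}{2N}r_{ij}$ for the bonding part --- but this does nothing for the alignment sums.) Worse, in {\bf F}$_1$ the third-party alignment terms $\frac{K_1}{N}\psi(r_{ki})v_{ki}$ are not small merely because $\|v\|\to0$, since $\psi(r_{ki})$ is unbounded precisely when collapse threatens; and deferring to \cite{C-C-M-P} conflates two different statements: that argument excludes collisions in \emph{finite} time via integrability of $\psi$ along trajectories on bounded intervals, and gives no obstruction whatsoever to $r_{ij}\to0$ as $t\to\infty$, which is what this proposition asserts. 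So the ``granting these estimates'' step is a genuine hole, not a technicality. (A smaller issue: in {\bf F}$_2$ finite-time collisions are not excluded by Proposition \ref{exi}, so your comparison argument must also survive $r_{ij}$ actually vanishing.)

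For contrast, the paper never does pairwise analysis, and its two pillars are exactly what your plan lacks. Lemma \ref{comp} reduces any collision to \emph{total} collapse; the energy-threshold observation $({\mathcal B})$ (total collapse forces ${\mathcal E}_p=\frac{K_2N}{2}R^2$, while ${\mathcal E}$ is non-increasing by Proposition \ref{P2.1}) converts a single time with ${\mathcal E}(T)<\frac{K_2N}{2}R^2$ into a permanent minimal distance. Lemma \ref{subseq} then rules out $r(t)=\bigl(\sum_{i,j}r_{ij}^2\bigr)^{1/2}\to0$: in {\bf F}$_2$ by a second-derivative comparison on the \emph{global} scalar $r$ --- morally your trapping argument, but performed on a quantity for which the problematic cross terms symmetrize exactly into $-K_2\sum_{i,j}r_{ij}(r_{ij}-2R)$ --- and in {\bf F}$_1$ by the dissipation bound $\int_0^\infty\|v\|^2\psi(r)\,dt\le{\mathcal E}(0)$, which replaces any pointwise control of the singular terms; a Darboux (intermediate-value) argument together with ${\mathcal E}_k\to0$ then lands the energy below the threshold. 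If you wish to salvage your route, the repair is essentially to transplant your differential inequality from $r_{ij}$ to $r$ in the regular case, and to adopt the energy-threshold mechanism rather than any C-C-M-P-style singularity argument in the singular case.
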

We use two convenient observations. The first one, established in \cite{A-C-H-L}, is the following comparability principle for the minimal and maximal inter-particle distance.

\begin{lemma}[\cite{A-C-H-L}]\label{comp}
Let $(x,v)$ be the solution to \eqref{B-2} or \eqref{B-3}. Then there exists a constant $\mu>1$ such that $\max_{i\neq j}r_{ij}\leq\mu\min_{i\neq j}r_{ij}$.
\end{lemma}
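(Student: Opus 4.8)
The object to control is the ratio of extremal inter-particle distances, so I set $\overline{r}(t):=\max_{i\neq j}r_{ij}(t)$ and $\underline{r}(t):=\min_{i\neq j}r_{ij}(t)$ and aim to bound $\overline{r}(t)/\underline{r}(t)$ by a constant $\mu>1$ depending only on the initial data and on $K_1,K_2,R,N$. Proposition \ref{P2.1} already supplies the uniform upper bound $\overline{r}(t)<d_M$, so the whole difficulty is to prevent $\underline{r}(t)$ from collapsing faster than $\overline{r}(t)$. Both $\overline{r}$ and $\underline{r}$ are Lipschitz in $t$ (finite maxima and minima of the smooth maps $t\mapsto r_{ij}(t)$) and hence differentiable almost everywhere; at a.e.\ $t$ their derivatives coincide with those of a maximizing pair $(p,q)$ and a minimizing pair $(m,n)$, and since $\frac{d}{dt}r_{ij}=\frac{x_{ij}\cdot v_{ij}}{r_{ij}}$ this gives
\[
\frac{d}{dt}\log\frac{\overline{r}}{\underline{r}}
= \frac{x_{pq}\cdot v_{pq}}{\overline{r}^{\,2}} - \frac{x_{mn}\cdot v_{mn}}{\underline{r}^{\,2}}
\qquad \text{for a.e. } t .
\]

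The maximal-pair term is the more benign one: $|x_{pq}\cdot v_{pq}|\le\overline{r}\,|v_{pq}|$, so it is dominated by $|v_{pq}|/\overline{r}$, which I would integrate using the square-integrability of the relative velocities supplied by Proposition \ref{P2.1}(1) together with a positive lower bound on the diameter $\overline{r}$ coming from the energy bound (a full collapse would raise the potential energy, contradicting its monotonicity). The crux is the minimal-pair term, where the factor $\underline{r}^{-2}$ can blow up as the closest pair approaches contact; the dangerous regime is exactly $x_{mn}\cdot v_{mn}<0$, which drives $\log(\overline{r}/\underline{r})$ upward. To defeat it I would invoke the dissipation identity of Proposition \ref{P2.1}(1), which gives $\int_0^\infty \psi(\underline{r})\,|v_{mn}|^2\,dt<\infty$; in the singular framework {\bf F}$_1$ this reads $\int_0^\infty \underline{r}^{-\alpha}|v_{mn}|^2\,dt<\infty$, so the strong damping of the approach velocity of any close pair is precisely the tool needed to bound $\int_0^\infty \underline{r}^{-2}|x_{mn}\cdot v_{mn}|\,dt$ and thereby keep $\log(\overline{r}/\underline{r})$ finite uniformly in time. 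I would reinforce this with the restoring sign of the bonding force, which repels the closest pair whenever $\underline{r}<2R$ and so provides a second-order barrier complementing the velocity estimate.

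The main obstacle, I expect, is the quantitative matching of singularity exponents: controlling $\underline{r}^{-2}|x_{mn}\cdot v_{mn}|$ by the available dissipation $\underline{r}^{-\alpha}|v_{mn}|^2$ requires an interpolation that is delicate when $\alpha$ is close to $1$, and in the regular framework {\bf F}$_2$ the alignment damping degenerates near contact, so the velocity estimate no longer suffices and the argument must lean instead on the spring-like second-order structure of the bonding force together with the uniform diameter bound $\overline{r}<d_M$. A further nuisance is that in $\eqref{B-3}_2$ the force on the extremal pair is entangled with a sum over the remaining $N-2$ particles, so one must carefully separate the self-interaction of the extremal pair from the collective contribution. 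It is this coupling of the first-order (velocity) and second-order (force) descriptions of the extremal distances, rather than any isolated estimate, that constitutes the real difficulty handled by the method of \cite{A-C-H-L}.
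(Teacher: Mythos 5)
Your proposal does not close, and it is worth noting first that the paper itself offers no argument here: it invokes Proposition 5.6, p.~641 of \cite{A-C-H-L} verbatim, so a blind attempt had to supply a complete proof, and yours has two concrete fatal gaps. The structural one is an integrability mismatch: your scheme is a Gr\"onwall bound on $\log\left(\overline{r}/\underline{r}\right)$ over the \emph{infinite} horizon, but the only dissipation available from Proposition \ref{P2.1}(1) is square-integrability in time, i.e.\ $\int_0^\infty \psi(r_{mn})|v_{mn}|^2\,dt<\infty$, while the ODE for the log-ratio requires $L^1$-in-time control. For the dangerous term, $\underline{r}^{-2}|x_{mn}\cdot v_{mn}|\leq \underline{r}^{-1}|v_{mn}|$, and Cauchy--Schwarz in time against the dissipation produces the factor $\bigl(\int_0^T \underline{r}^{\,\alpha-2}\,dt\bigr)^{1/2}$, which grows at least like $\sqrt{T}$ even if $\underline{r}$ stays bounded away from zero (and diverges worse if $\underline{r}\to 0$ with $\alpha<2$); Young's inequality likewise leaves the non-integrable remainder $\underline{r}^{\,\alpha-2}$. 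The same $L^2$-versus-$L^1$ problem already kills your ``benign'' term $|v_{pq}|/\overline{r}$: Proposition \ref{P2.1} gives $v\in L^2(0,\infty)$, not $L^1$. At best your method yields $\log\left(\overline{r}/\underline{r}\right)\lesssim \sqrt{t}$, i.e.\ no time-uniform $\mu$, whereas the lemma is used in the paper in the sharp contrapositive form ``one collision implies total collapse,'' which requires $\mu$ uniform up to any collision time.

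The second gap is an outright false step with a circularity attached: you claim a positive lower bound on the diameter $\overline{r}$ because ``a full collapse would raise the potential energy, contradicting its monotonicity.'' But at total collapse ${\mathcal E}_p=\frac{K_2N}{2}R^2$, a \emph{finite} value, so monotonicity of ${\mathcal E}$ excludes collapse only if ${\mathcal E}$ has already dropped below that threshold --- this is precisely observation $({\mathcal B})$ in the paper, and establishing that the threshold is eventually undercut is the entire content of Lemma \ref{subseq} and Proposition \ref{anoncoll}, both of which \emph{use} Lemma \ref{comp} as an ingredient. So your argument assumes downstream consequences of the very lemma being proved. Finally, in framework {\bf F}$_2$ (and note Lemma \ref{comp} is asserted for both systems with either weight) there is no near-contact dissipation at all, and your fallback --- the ``spring-like second-order barrier'' of the bonding force --- is not an estimate: the force on the minimal pair is a sum over all $N-2$ other particles and can be net attractive when the pair sits inside a cluster, a difficulty you name but do not resolve. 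A correct proof must exploit the specific structure isolated in \cite{A-C-H-L} rather than the extremal-pair differential inequality, which, as written, cannot produce the constant $\mu$.
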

\begin{proof}
The proof can be found in \cite{A-C-H-L}, Proposition 5.6, page 641. 
\end{proof}
The second one is a simple observation that if the total energy ${\mathcal E}$ is strictly smaller than $\frac{K_2N}{2}R^2$ at any time $t=T$, then there cannot be any asymptotic (or any finite-time) collisions between the particles after the time $T$. It is an immediate corollary of Proposition \ref{P2.1}. Suppose that there is a collision at the time $t=t_0$. Then, by Lemma \ref{comp}, any collision between two particles implies the total collapse of the particles, which leads to $r_{ij}(t_0)=0$ for all $i,j=1,2,\cdots, N$. Then, from the definition of potential energy, we have
\[\mathcal{E}(t_0)\ge \mathcal{E}_p(t_0)=\frac{K_2N}{2}R^2.\]
Therefore, once the total energy becomes less than $\frac{K_2N}{2}R^2$ at time $T$, we have ${\mathcal E}_p(s)\leq{\mathcal E}(s)\leq{\mathcal E}(T)$ for $s>T$ and the total collapse between the particles is impossible. Again by Lemma \ref{comp}, any collision between particles is impossible and due to the sharpness of ${\mathcal E}(T)<\frac{K_2N}{2}R^2$, the smallest distance between particles is positive in $[T,\infty)$. We summarize these observations:

\begin{description}
\item[(${\mathcal A}$)] There exists a constant $\mu>1$ such that $\max_{i\neq j}r_{ij}\leq\mu\min_{i\neq j}r_{ij}$;
\item[(${\mathcal B}$)] If at any time $T$, we have ${\mathcal E}(T)<\frac{K_2N}{2}R^2$, then there exists $\rho>0$ such that $\min_{i\neq j}r_{ij}\geq\rho$ in $[T,\infty)$.
\end{description}

By observation $({\mathcal A})$ it suffices to show that the asymptotic total collapse of the positions is impossible with $t\to\infty$. Let
\begin{align*}
r(t):=\sqrt{\sum_{i,j=1}^N|x_i-x_j|^2}.
\end{align*}
Note that, using the notation $\|v\|$ introduced in the proof of Proposition \ref{kinetdec}, we have
\[\sum_{i,j=1}^N|v_{ij}|^2=\sum_{i,j=1}^N|v_i-v_j|^2=2N\sum_{i=1}^N|v_i|^2=2N\|v\|^2.\]

Therefore, our goal is to prove that there exists $T>0$ and $\rho>0$, such that for all $t\geq T$ we have $r(t)\geq \rho$. However we begin with a weaker claim that we present in the following lemma.

\begin{lemma}\label{subseq}
In both frameworks {\bf F}$_1$ and {\bf F}$_2$ there exists a sequence $t_n\to\infty$ and $\rho>0$, such that $r(t_n)\geq\rho$. In other words, $r$ does not converge to $0$ with $t\to\infty$. 
\end{lemma}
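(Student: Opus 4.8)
The plan is to argue by contradiction: suppose that $r(t)\to 0$ as $t\to\infty$, and exploit the fact that the bonding force is repulsive at short range through a virial-type (second-moment) identity. Set $S:=\sum_{i=1}^N|x_i|^2$; by \eqref{vel0} we have $r^2=2NS$, so it suffices to show that $S\not\to 0$. Differentiating $S$ twice, symmetrizing the right-hand side of \eqref{B-3}$_2$ exactly as in the proof of Proposition \ref{kinetdec}, and using the identity $x_{ij}\cdot v_{ij}=r_{ij}\dot r_{ij}$, I would rewrite the singular alignment contribution as a total time derivative. Introducing a primitive $\Psi$ of $s\mapsto s\psi(s)$ and $G(t):=\frac{K_1}{N}\sum_{i\neq j}\Psi(r_{ij})$ (the diagonal terms carry the factor $x_{ii}=0$ and drop out), this yields
\[\ddot S+\dot G=2\|v\|^2-\frac{K_2}{2N}r^2+\frac{K_2R}{N}\sum_{i,j=1}^N r_{ij},\]
in which the last term is the short-range repulsion that I expect to dominate as the particles collapse.

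Next I would convert this into a closed differential inequality for $S$. Dropping the nonnegative kinetic term and invoking the comparability principle $(\mathcal A)$, all nonzero $r_{ij}$ lie between $\min_{k\neq l}r_{kl}$ and $\mu\min_{k\neq l}r_{kl}$, which gives $\sum_{i,j}r_{ij}\ge c_1 r=c_1\sqrt{2NS}$ for some $c_1=c_1(N,\mu)>0$, while $r^2=2NS$. Since $\sqrt S$ dominates $S$ as $S\to 0$, there exist $T_0>0$ and $c>0$ with
\[\ddot S(t)+\dot G(t)\ge c\sqrt{S(t)}\qquad\text{for all }t\ge T_0.\]
Writing $z:=\dot S+G$, this says that $z$ is nondecreasing on $[T_0,\infty)$. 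Note also that $|\dot S|\le 2\sqrt S\,\|v\|\to 0$, because $\|v\|\to 0$ by Proposition \ref{kinetdec} and $S\to 0$; hence $\dot S\to 0$.

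The conclusion then splits according to the behaviour of $G$. For the strongly singular range $\alpha\ge 2$ one has $\Psi(s)\to-\infty$ as $s\to 0^+$, so $G\to-\infty$ while $\dot S\to 0$, forcing $z\to-\infty$; this contradicts $z$ being nondecreasing and finishes the argument at once. In the remaining cases (the singular kernel with $1\le\alpha<2$ and the regular kernel of \textbf{F}$_2$) one may normalize the primitive so that $\Psi\ge 0$ and $\Psi(s)\to 0$ as $s\to 0^+$, whence $G\ge 0$ and $G\to 0$, so that $z\uparrow 0$. Integrating $\dot z\ge c\sqrt S$ from $t$ to $\infty$ gives $-z(t)\ge c\int_t^\infty\sqrt{S}\,ds$, and since $-\dot S=G-z\ge -z$ I obtain $-\dot S(t)\ge c\,H(t)$ with $H(t):=\int_t^\infty\sqrt{S}\,ds$ (finite, because $\dot z\ge c\sqrt S$ is integrable on $[T_0,\infty)$). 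Using $H'=-\sqrt S$, i.e. $S=(H')^2$, this is a closed second-order inequality for $H$; multiplying by $-H'>0$ and integrating shows that $\frac23(H')^3+\frac{c}{2}H^2$ is nondecreasing and tends to $0$, whence $H\le C S^{3/4}$, and a final Gronwall-type estimate gives $\frac{d}{dt}H^{1/3}\le-\mathrm{const}<0$, which drives $H^{1/3}$ to $-\infty$ --- the desired contradiction.

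The main obstacle is the short-range repulsion term. A priori the bonding forces exerted on a given particle by all the others could cancel, so that collapse is not obviously prevented; this is precisely where the comparability principle $(\mathcal A)$ is indispensable, since it rules out the degenerate, highly anisotropic configurations for which $\sum_{i,j}r_{ij}$ would fail to bound $\sqrt S$ from below, and thereby guarantees a genuine outward push. The second delicate point is the singular alignment term, which is neither sign-definite nor bounded as $r\to 0$; the key device is to recognise it as the exact derivative $\dot G$, so that upon integration it contributes only the boundary quantity $G$, whose sign and limit are explicit. The closing ODE comparison for $H$ in the range $\alpha<2$ must then be carried out carefully to avoid circularity, since the crude estimate alone returns only the insufficient integrability $\int^\infty\sqrt S\,ds<\infty$.
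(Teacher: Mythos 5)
Your proof is correct, but it follows a genuinely different and unified route. The paper treats the two frameworks separately: for the regular kernel it differentiates $r^2$ twice, bounds the alignment term crudely by $|\mathcal{I}_1|\leq 2\sqrt{2N}K_1 r\|v\|$ (which needs $\psi$ bounded, hence fails for \eqref{sing}), and derives the convexity-type inequality $\frac{d^2r}{dt^2}\geq -K_1\|v\|-\frac{K_2}{2}r+K_2R$, which is incompatible with $r\to 0$; for the singular kernel it abandons the second-derivative computation entirely and instead extracts from the dissipation bound $\int_0^\infty\|v\|^2\psi(r)\,dt\leq\mathcal{E}(0)$ a sequence $t_n$ along which $\|v(t_n)\|^2=a_nr^\alpha(t_n)$ with $a_n\to 0$, then shows the energy drops below the threshold $\frac{K_2N}{2}R^2$ and invokes observation $(\mathcal{B})$. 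Your virial argument replaces both: by integrating the alignment term exactly via the primitive $\Psi$ of $s\mapsto s\psi(s)$, you convert the only term the paper cannot control in the singular case into a boundary quantity $G$, whose sign dichotomy ($G\to-\infty$ for $\alpha\geq 2$; $G\geq 0$, $G\to 0$ for $\alpha<2$ and regular $\psi$) does all the work. I verified the computations: the identity $\ddot S+\dot G=2\|v\|^2-\frac{K_2}{2N}r^2+\frac{K_2R}{N}\sum_{i,j}r_{ij}$ has the right constants, the monotonicity of $z=\dot S+G$, the bound $-\dot S\geq cH$, the conserved-type quantity $\frac23(H')^3+\frac{c}{2}H^2$ yielding $H\leq CS^{3/4}$, and the final estimate $\frac{d}{dt}H^{1/3}\leq-\mathrm{const}$ are all sound. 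What each approach buys: yours handles all $\alpha\geq 1$ (indeed any $\alpha>0$) and both frameworks in one scheme, is strikingly short for $\alpha\geq2$, and does not use observation $(\mathcal{B})$ at all, so the lemma becomes independent of the energy-threshold machinery; the paper's two proofs are each individually simpler and avoid your delicate second-order ODE comparison for $H$ in the range $\alpha<2$. Two small corrections: your claim that $(\mathcal{A})$ is \emph{indispensable} for $\sum_{i,j}r_{ij}\geq c_1 r$ is off --- the elementary inequality $\sum_{i,j}r_{ij}\geq\big(\sum_{i,j}r_{ij}^2\big)^{1/2}=r$ holds for any configuration and is exactly what the paper uses in \eqref{singlem}, so $(\mathcal{A})$ can be dropped from your proof entirely; and you should say a word ruling out the degenerate scenario $H(t^*)=0$, i.e.\ $S\equiv 0$ on $[t^*,\infty)$ (sustained total collapse): in {\bf F}$_1$ this is immediate from Proposition \ref{exi}, while in {\bf F}$_2$ it is the same implicit non-degeneracy the paper itself assumes when it divides by $r$, so this is a shared technicality rather than a gap specific to your argument.
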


\begin{proof}
The proof varies depending on the framework. We begin with, what we believe is, a more natural argumentation in the case of regular weight.\\

\noindent $\diamond$ {\bf Regular weight.} In the {\bf F}$_2$ case we assume for simplicity that $\psi(0)=1$. We differentiate $r^2$ with respect to time to get
\begin{align*}
\frac{d}{dt}r^2=2\sum_{i,j=1}^Nx_{ij}\cdot v_{ij}.
\end{align*}
Then we apply \eqref{B-3}$_2$ to find the second derivative of $r^2$:
\begin{align}\label{qA}
\frac{d^2}{dt^2}r^2 &= 2\sum_{i,j=1}^N|v_{ij}|^2 + \frac{2K_1}{N}\sum_{i,j,k}x_{ij}\cdot\Big(v_{ki}\psi(r_{ki})-v_{kj}\psi(r_{kj})\Big) \nonumber\\ 
&+ \frac{K_2}{N}\sum_{i,j,k}x_{ij}\cdot\left(\frac{r_{ki}-2R}{r_{ki}}x_{ki} -\frac{r_{kj}-2R}{r_{kj}}x_{kj}\right)\\
& =: 4N\|v\|^2 + \mathcal{I}_1 + \mathcal{I}_2.\nonumber
\end{align}
Estimation of $\mathcal{I}_1$ and $\mathcal{I}_2$ follow by symmetrization (i.e., by exchanging the indices $i\leftrightarrow k$ and $j\leftrightarrow k$), which leads to
\[\mathcal{I}_1 = -2K_1\sum_{i,j=1}^Nx_{ij}\cdot v_{ij}\psi(r_{ij}),\quad \mathcal{I}_2 = -K_2\sum_{i,j=1}^N r_{ij}(r_{ij}-2R).\]
Therefore we have
\begin{align}\label{rprim}
\frac{d^2}{dt^2}r^2 = 4N\|v\|^2 - 2K_1\sum_{i,j=1}^Nx_{ij}\cdot v_{ij}\psi(r_{ij})\\
 - K_2\sum_{i,j=1}^N r_{ij}(r_{ij}-2R).\nonumber
\end{align}
On the other hand
\begin{align}\label{rbis}
\frac{d^2}{dt^2}r^2 = 2\frac{d}{dt}\left(r\frac{dr}{dt}\right) = 2\left(\frac{dr}{dt}\right)^2 + 2r\frac{d^2r}{dt^2}.
\end{align}
Combining \eqref{rprim} and \eqref{rbis}, together with Cauchy-Schwarz inequalities
\begin{align*}
\left|\frac{dr}{dt}\right|\leq \sqrt{2N} \|v\|,\quad \mbox{and}\quad |\mathcal{I}_1|&\leq 2\sqrt{2N}K_1r\|v\|
\end{align*}
leads to
\begin{align*}
r\frac{d^2r}{dt^2}& =\frac{1}{2}\frac{d^2}{dt^2}r^2-\left(\frac{dr}{dt}\right)^2\ge- K_1\sum_{i,j=1}^Nx_{ij}\cdot v_{ij}\psi(r_{ij})\\
& - \frac{K_2}{2}\sum_{i,j=1}^N r_{ij}^2+K_2R\sum_{i,j=1}^Nr_{ij}\geq - K_1r\|v\| - \frac{K_2}{2}r^2\\
& + K_2R\sum_{i,j=1}^N r_{ij}\ge - K_1r\|v\| - \frac{K_2}{2}r^2 + K_2Rr.
\end{align*}
We divide both sides of the above inequality by $r$ to obtain
\begin{align*}
\frac{d^2r}{dt^2}\geq -K_1\|v\| - \frac{K_2}{2}r + K_2R.
\end{align*}
We know from Proposition \ref{kinetdec} that $\|v(t)\|\to 0$ as $t\to\infty$, thus if $r(t)$ also decays to 0, then for all sufficiently large $t$, we have
\begin{align*}
\frac{d^2r}{dt^2}\geq \frac{K_2}{2}R,
\end{align*}
which is impossible with $r(t)\to 0$. Therefore $r$ does not converge to $0$ as $t\to\infty$ and the assertion of the lemma is proved.\\

\noindent $\diamond$ {\bf Singular weight.} In the {\bf F}$_1$ case the argumentation is different. By the definition of the total energy and noting that $\sum r_{ij}\geq \sqrt{\sum r_{ij}^2}$, we estimate it as
\begin{align}\label{singlem}
{\mathcal E} = {\mathcal E}_k + {\mathcal E}_p = \frac{1}{2}\|v\|^2 + \frac{K_2}{8N}r^2 + \frac{K_2}{2N}R\sum_{i,j=1}^N\Big(R-r_{ij}\Big)\nonumber\\
\leq \frac{1}{2}\|v\|^2 + \frac{K_2}{8N}r^2 +  \frac{K_2}{2N}R(N^2R-r). 
\end{align}

Furthermore, by Proposition \ref{P2.1}, we know that
\begin{align*}
\int_0^\infty \|v\|^2\psi(r)dt= \frac{1}{2N}\int_0^\infty\sum_{i,j=1}^N |v_{ij}|^2\psi(r)dt\\
\le \frac{1}{2N}\int_0^\infty\sum_{i,j=1}^N |v_{ij}|^2\psi(r_{ij})dt\leq {\mathcal E}(0),
\end{align*}
which implies that there exists a sequence $t_n\to\infty$, such that
\begin{align*}
\|v(t_n)\|^2\psi(r(t_n))\to 0.
\end{align*}
If we define $a_n:=\|v(t_n)\|^2\psi(r(t_n))=\|v(t_n)\|^2r^{-\alpha}(t_n)$, we have,
\[a_n\to 0,\quad \mbox{and} \quad\|v(t_n)\|^2 = a_nr^\alpha(t_n).\]

Now assume that $r(t)\to 0$ as $t\to\infty$. Then we come back to \eqref{singlem} to see that
\begin{align*}
{\mathcal E}(t_n)\leq \frac{1}{2}a_nr^\alpha(t_n) + \frac{K_2}{8N}r^2(t_n) +  \frac{K_2}{2N}R(N^2R-r(t_n)).
\end{align*}
Therefore, if we fix sufficiently large $n_0$ so that
\[a_n<\frac{K_2R}{4N},\quad \mbox{and}\quad r(t_n)<1,\quad \mbox{for all $n\ge n_0$},\]
then, 
\begin{align*}
{\mathcal E}(t_{n_0})& < \frac{1}{2}\frac{K_2R}{4N}r^\alpha(t_{n_0})+\frac{K_2R}{8N}r(t_{n_0})+\frac{K_2N}{2}R^2-\frac{K_2R}{2N}r(t_{n_0})\\\
&\leq \frac{1}{2}\frac{K_2R}{4N}r(t_{n_0}) + \frac{K_2R}{8N}r(t_{n_0}) +  \frac{K_2N}{2}R^2-\frac{K_2R}{2N}r(t_{n_0})\\
&\leq \frac{K_2N}{2}R^2-\frac{K_2R}{4N}r(t_{n_0})<\frac{K_2N}{2}R^2.
\end{align*}

Here we note that, since $\alpha\geq 1$ and $r<1$, we have $r^\alpha\leq r$.
Thus, by observation $({\mathcal B})$, it is impossible that $r\to 0$ as $t\to\infty$ and the claim of the lemma is proved.
\end{proof}

\begin{proof}[Proof of Proposition \ref{anoncoll}]

Lemma \ref{subseq} states that regardless of the framework, there exists a subsequence $t_n\to\infty$ and $\rho>0$, such that $r(t_n)\geq\rho$. We may assume without a loss of generality that $\rho\leq R$.

We prove Proposition \ref{anoncoll} by contradiction. Suppose that there exists another subsequence $s_n\to\infty$, such that $r(s_n)\to 0$. It means that ${\mathcal E}_p(s_n)\to \frac{K_2N}{2}R^2$. Thanks to Darboux property, there exists a sequence $q_n\to \infty$ such that $t_n\le q_n\le s_n$ and $R\geq r(q_n)\geq\rho$ (see Figure \ref{Fig0}). Then, by observation $({\mathcal A})$, we have $R\geq r_{ij}(q_n)\geq \frac{1}{\mu N^2}\rho$ since
\[r_{ij}\ge \min_{1\le i\neq j\le N}r_{ij}\ge\frac{1}{\mu}\max_{1\le i\neq j\le N}r_{ij}\ge \frac{r}{\mu N^2}.\]

\begin{figure}[h!]
\centering
\includegraphics[width=0.7\textwidth]{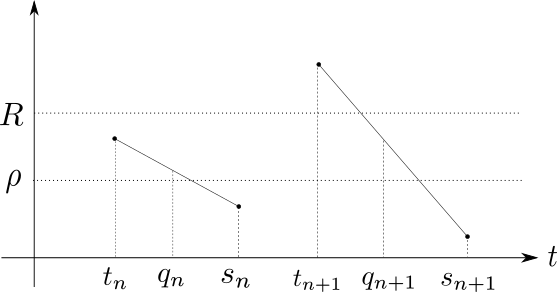}
\caption{Schematic explanation for Darboux property. We introduce $q_n$ because we do not know that $r(t_n)\leq R$.} 
\label{Fig0}
\end{figure}

Then
\begin{align*}
{\mathcal E}(q_n) = {\mathcal E}_k(q_n) + {\mathcal E}_p(q_n)\leq {\mathcal E}_k(q_n) + \frac{K_2N}{8}\left(2R-\frac{\rho}{\mu N^2}\right)^2
\end{align*}
and since ${\mathcal E}_k(q_n)\to 0$ by Proposition \ref{kinetdec}, there exists $n_0$, such that
\begin{align*}
{\mathcal E}(q_{n_0}) \leq \frac{K_2N}{8}\left(2R-\frac{\rho}{2\mu N^2}\right)^2<\frac{K_2N}{2}R^2.
\end{align*}
Then, observation $({\mathcal B})$ implies that there exists $\tilde{\rho}>0$ such that for all $t>q_{n_0}$ and all $i,j=1,...,N$, we have $r_{ij}(t)\geq\tilde{\rho}$. This is contradictory to the assumption of existence of sequence $\{s_n\}$ and the proof of Proposition \ref{anoncoll} is completed.

\end{proof}
We combine Propositions \ref{exi} and \ref{anoncoll} to obtain the following corollary.
\begin{corollary}[Global minimal distance]
In framework {\bf F}$_1$ there exists a global minimal distance between the particles.
\end{corollary}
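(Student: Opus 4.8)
The plan is to stitch together the finite-time and asymptotic information about the inter-particle distance supplied by Propositions \ref{exi} and \ref{anoncoll}. First I would invoke Proposition \ref{anoncoll} to fix $T>0$ and $\rho>0$ for which $|x_i(t)-x_j(t)|\geq\rho$ holds for all $t\geq T$ and all $i\neq j$; this controls the tail of the trajectory. The remaining work is then confined to the compact initial interval $[0,T]$.

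On $[0,T]$, Proposition \ref{exi} provides two facts that are crucial in framework {\bf F}$_1$: the solution is classical (so each trajectory $t\mapsto x_i(t)$ is continuous) and there are no finite-time collisions, whence $|x_i(t)-x_j(t)|>0$ for every $t\in[0,T]$ and every $i\neq j$. I would then introduce the continuous function
\[
m(t):=\min_{i\neq j}|x_i(t)-x_j(t)|,\qquad t\in[0,T],
\]
which is strictly positive on $[0,T]$. Since $m$ is continuous on the compact set $[0,T]$, it attains its minimum, so that $\rho_0:=\min_{t\in[0,T]}m(t)>0$.

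It then suffices to set $\rho_*:=\min\{\rho_0,\rho\}>0$ and to observe that $|x_i(t)-x_j(t)|\geq\rho_*$ for all $t\geq0$ and all $i\neq j$, which is exactly the claimed global-in-time minimal distance. Thus the corollary reduces to a routine compactness observation once the two propositions are in hand.

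The only genuinely nontrivial ingredient is the finite-time collision-avoidance from Proposition \ref{exi}, and this is precisely where the singularity of the communication weight in framework {\bf F}$_1$ enters: without it, the minimum of $m$ over $[0,T]$ could vanish (as may happen in the regular framework {\bf F}$_2$), which is exactly why the statement is restricted to {\bf F}$_1$. I therefore do not expect any serious obstacle in the argument itself; the entire mathematical weight sits in the cited propositions, and the corollary merely records their combination.
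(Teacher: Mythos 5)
Your proposal is correct and matches the paper's intent exactly: the paper simply states that the corollary is obtained by combining Propositions \ref{exi} and \ref{anoncoll}, and your argument spells out the implicit details (continuity of the pairwise minimum distance, strict positivity on the compact interval $[0,T]$ via finite-time collision avoidance and the non-collisional initial data, and the extreme value theorem). Nothing is missing.
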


\subsection{Bound on the relative distance}

\begin{proposition}\label{aspos}
In both frameworks {\bf F}$_1$ and {\bf F}$_2$ we have
\begin{align*}
\limsup_{t\to\infty}|x_i(t)|\leq 2R, \quad\limsup_{t\to\infty}|x_i(t)-x_j(t)|\leq 4R.
\end{align*}
\end{proposition}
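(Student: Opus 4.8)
The plan is to reduce the second inequality to the first and then establish $\limsup_{t\to\infty}|x_i(t)|\le 2R$ for each fixed $i$ through a scalar second‑order barrier argument. For the reduction, recall that the center of mass vanishes by \eqref{vel0}, so $r_{ij}=|x_i-x_j|\le|x_i|+|x_j|$ and hence $\limsup_t r_{ij}(t)\le\limsup_t|x_i(t)|+\limsup_t|x_j(t)|\le 4R$ once the first bound is known. It therefore suffices to control $y_i:=|x_i|^2$. Differentiating twice and inserting \eqref{B-3}$_2$ gives
\[\frac{d^2}{dt^2}y_i=2|v_i|^2+\frac{2K_1}{N}\sum_{j=1}^N\psi(r_{ij})\,x_i\cdot(v_j-v_i)+\frac{K_2}{N}\sum_{j=1}^N\frac{r_{ij}-2R}{r_{ij}}\,x_i\cdot(x_j-x_i).\]

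The key step is a sharp pointwise bound on the bonding term that holds for \emph{every} index $i$, not only the outermost one. Using $x_i\cdot(x_j-x_i)=-\tfrac12(|x_i|^2-|x_j|^2+r_{ij}^2)$, the identity $\sum_j(|x_i|^2-|x_j|^2+r_{ij}^2)=2N|x_i|^2$ (which follows from $\sum_j x_j=0$), and $|x_i|^2-|x_j|^2+r_{ij}^2=2x_i\cdot x_{ij}$, the bonding term collapses to
\[-K_2|x_i|^2+\frac{2K_2R}{N}\sum_{j=1}^N x_i\cdot\frac{x_{ij}}{r_{ij}}\le -K_2|x_i|^2+2K_2R|x_i|=-K_2|x_i|\big(|x_i|-2R\big),\]
the inequality being Cauchy--Schwarz combined with $|x_{ij}/r_{ij}|=1$. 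For the alignment term I would invoke Proposition \ref{anoncoll} (in framework {\bf F}$_1$) or boundedness of $\psi$ (in {\bf F}$_2$) to estimate $\psi(r_{ij})\le\psi(\rho)$ for $t\ge T$, obtaining a bound of the form $C|x_i|\,\|v\|$. Since $|x_i|\le d_M$ by Proposition \ref{P2.1} and $\|v\|\to0$ by Proposition \ref{kinetdec}, and $2|v_i|^2\le 2\|v\|^2$, this produces the autonomous‑type differential inequality
\[\frac{d^2}{dt^2}y_i\le \varepsilon(t)-K_2\,y_i+2K_2R\sqrt{y_i},\qquad \varepsilon(t):=2\|v\|^2+Cd_M\|v\|\to0,\]
valid for $t\ge T$.

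Finally I would run a barrier argument on this scalar inequality, exploiting the extra fact that $\dot y_i=2x_i\cdot v_i$ obeys $|\dot y_i|\le 2d_M\|v\|\to0$, so that $\dot y_i\to0$ while $y_i$ stays bounded. Writing $h(y):=-K_2y+2K_2R\sqrt{y}$, note $h(4R^2)=0$ and $h$ is negative and decreasing on $(4R^2,\infty)$; thus for any fixed $\beta>4R^2$ there are $a>0$ and $T_2\ge T$ with $\ddot y_i\le-a$ whenever $y_i\ge\beta$ and $t\ge T_2$. If $y_i(t^*)>\beta$ at some large $t^*$, let $t_0<t^*$ be the last time $y_i=\beta$; on $[t_0,t^*]$ the arc is concave with $\ddot y_i\le-a$, so $y_i(t^*)\le\beta+\dot y_i(t_0)^2/(2a)$. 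One checks $t_0\to\infty$ as $t^*\to\infty$ (otherwise $y_i\ge\beta$ eventually, forcing $\dot y_i\to-\infty$, contradicting boundedness), hence $\dot y_i(t_0)\to0$ and $\limsup_t y_i\le\beta$; letting $\beta\downarrow4R^2$ yields $\limsup_t|x_i|\le2R$. I expect the main obstacle to lie precisely in this final passage: organizing the barrier estimate cleanly and, above all, recognizing that the Cauchy--Schwarz bound $\sum_j x_i\cdot x_{ij}/r_{ij}\le N|x_i|$ renders the decisive bonding estimate valid for every particle simultaneously, thereby sidestepping any delicate tracking of the time‑dependent outermost index.
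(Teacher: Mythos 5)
Your proof is correct, but it takes a genuinely different route from the paper. The paper argues through the acceleration: by Proposition \ref{anoncoll} the right-hand side of \eqref{B-3} is uniformly regular for large times, so $v_i$ and $\frac{dv_i}{dt}$ are uniformly continuous; since $v_i\to 0$, a Barbalat-type argument gives $\frac{dv_i}{dt}\to 0$, and since the alignment term vanishes on its own (as $v_{ij}\to 0$ with $\psi(r_{ij})$ kept away from the singularity by the minimal distance), the bonding force itself must tend to $0$; combined with the zero-mean condition \eqref{vel0} this yields $\bigl||x_i|-\bigl|\frac{2R}{N}\sum_j x_{ji}/r_{ji}\bigr|\bigr|<\varepsilon$ and hence $|x_i|\le 2R+\varepsilon$, the decisive estimate being exactly your unit-vector bound $\bigl|\frac{1}{N}\sum_j x_{ji}/r_{ji}\bigr|\le 1$. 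You instead close a second-order differential inequality for $y_i=|x_i|^2$ and run a barrier argument, using $\|v\|\to 0$ (Proposition \ref{kinetdec}), $|x_i|\le d_M$ (Proposition \ref{P2.1} plus \eqref{vel0}), $|\dot y_i|\le 2d_M\|v\|\to 0$, and Proposition \ref{anoncoll} only to bound $\psi(r_{ij})\le\psi(\rho)$ in framework {\bf F}$_1$. Both proofs thus hinge on the same two algebraic facts --- zero center of mass and Cauchy--Schwarz applied to the average of the unit vectors $x_{ij}/r_{ij}$ --- but your route buys something concrete: it avoids needing uniform continuity of $\frac{dv_i}{dt}$, a point the paper passes over quickly and which, for the singular kernel, itself quietly relies on the minimal distance to control $\psi$ (and its derivative) along trajectories; you only need $\psi$ bounded for $t\geq T$. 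The price is the longer, more quantitative barrier analysis, whereas the paper's Barbalat argument is shorter once the regularity is granted. Two cosmetic points: restrict the splitting $\frac{r_{ij}-2R}{r_{ij}}=1-\frac{2R}{r_{ij}}$ to $j\neq i$ (the diagonal summand vanishes identically, but $r_{ii}=0$ makes the split undefined as written), and note that your anticipated obstacle of ``tracking the outermost index'' never arises in the paper either --- its argument, like yours, applies to every $i$ simultaneously for precisely the reason you identify.
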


\begin{proof}
The lack of asymptotic collisions ensured by Proposition \ref{anoncoll} provides a uniform-in-time regularity of the right-hand side of \eqref{B-3} regardless whether the communication weight is singular or not. Therefore both $v_i$ and $\displaystyle \frac{dv_i}{dt}$ are uniformly continuous and since $v_i\to 0$ as $t\to \infty$, we have
\[0=\lim_{t\to \infty}\frac{dv_i}{dt}=\lim_{t\to\infty}\frac{K_2}{N}\sum_{j=1}^N\frac{1}{2r_{ij}}(r_{ij}-2R)(x_j-x_i).\]

The above equality implies that for any fixed $\varepsilon\ll1$, there exists sufficiently large $t_0$ such that for $t>t_0$,
\begin{equation}\label{C-7}
\left|\sum_{j=1}^N\left[x_j-x_i-\frac{2R}{r_{ij}}(x_j-x_i)\right]\right|<\varepsilon,\quad i=1,2,\cdots, N, \quad t>t_0.
\end{equation}
Together with the zero-sum condition $\displaystyle \sum_{i=1}^Nx_i=0$ (see \eqref{vel0}), \eqref{C-7} implies
\[\left||x_i|-\left|\frac{2R}{N}\sum_{j=1}^N\frac{x_{ji}}{r_{ji}}\right|\right|\le \left|x_i+\frac{2R}{N}\sum_{j=1}^N\frac{x_{ji}}{r_{ji}}\right|<\varepsilon,\quad i=1,2,\cdots, N.\]
Therefore, we have the asymptotic bound for the positions
\[|x_i|\le \left|\frac{2R}{N}\sum_{j=1}^N\frac{x_j-x_i}{|x_j-x_i|}\right| + \varepsilon\le 2R+\varepsilon,\]
and relative distances
\[r_{ij}=|x_i-x_j|\le |x_i|+|x_j|\le 4R+2\varepsilon.\]
Since $\varepsilon$ was arbitrary, this directly implies
\[\limsup_{t\to\infty}|x_i|\le 2R,\quad \mbox{and} \limsup_{t\to\infty}r_{ij}\le 4R.\]
\end{proof}

\section{Numerical Simulation}\label{sec:5}
In this section, we provide the numerical simulation supporting the analytical theorems in this paper. We conduct four simulations: to illustrate the asymptotic bound for position; to compare decay of the energy across four cases of singular or regular original and simplified CSB system; to illustrate the collision avoidance granted by the singularity of the communication weight; and to show that from the numerical perspective the positions of the particles converge to an equilibrium. 

\subsection{Asymptotic bound for position}\label{numpos}
Once the asymptotic bound for position is obtained, it is easy to see that the asymptotic relative distance between particles is at most twice the bound for position. To conduct the numerical simulation, we randomly choose $N=10, 15, 20, 25$ initial positions and velocities from 2 dimensional random vectors uniformly distributed on $[-5,5]\times[-5,5]$ respectively. We also take $R=2$. Figure \ref{Fig1} shows the solutions at $t=500$, at which the solutions are near equilibrium. The large circle is centered at the center of mass with radius $2R$. As expected in Proposition \ref{aspos}, all of the particles stay in that circle, which implies that the radius of position is bounded by $2R$ and their relative distances are bounded by $4R$.

\begin{figure}[h!]
\mbox{
\subfigure[$N=10$]{\includegraphics[width=0.5\textwidth]{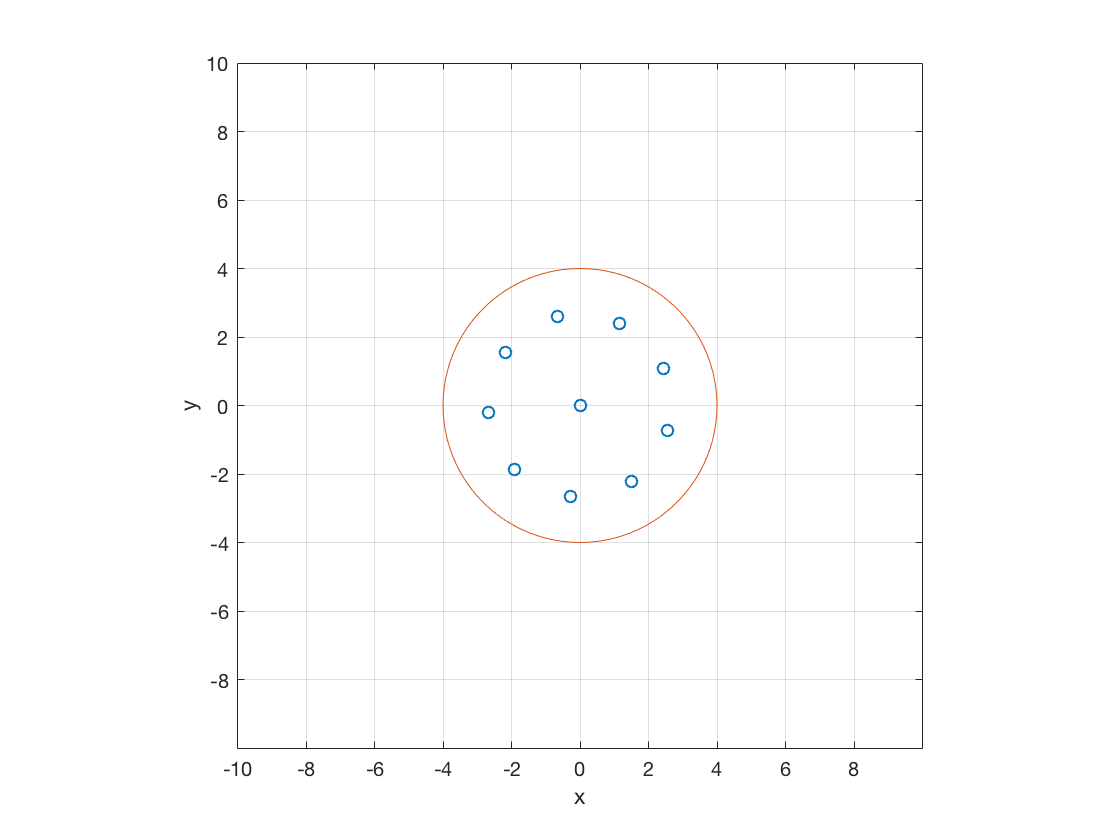}
\label{Fig1a}}
\subfigure[$N=15$]{\includegraphics[width=0.5\textwidth]{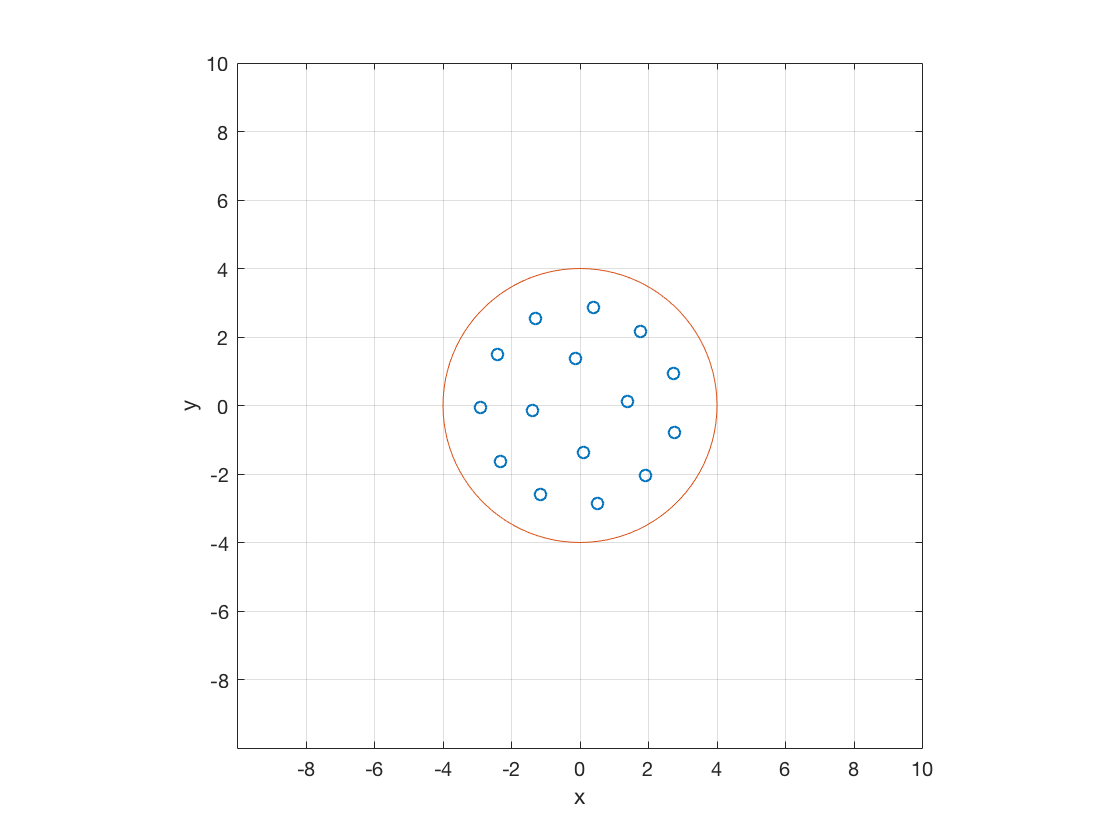}
\label{Fig1b}}
}
\mbox{
\subfigure[$N=20$]{\includegraphics[width=0.5\textwidth]{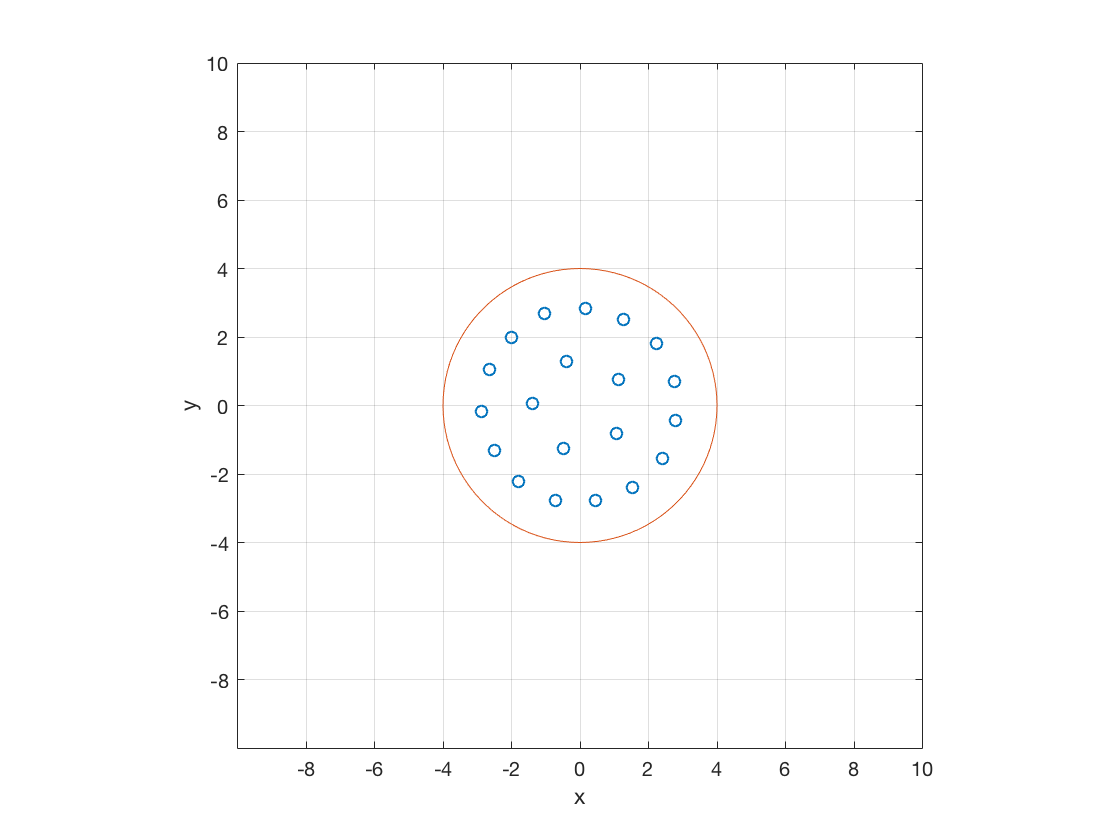}
\label{Fig1c}}
\subfigure[$N=25$]{\includegraphics[width=0.5\textwidth]{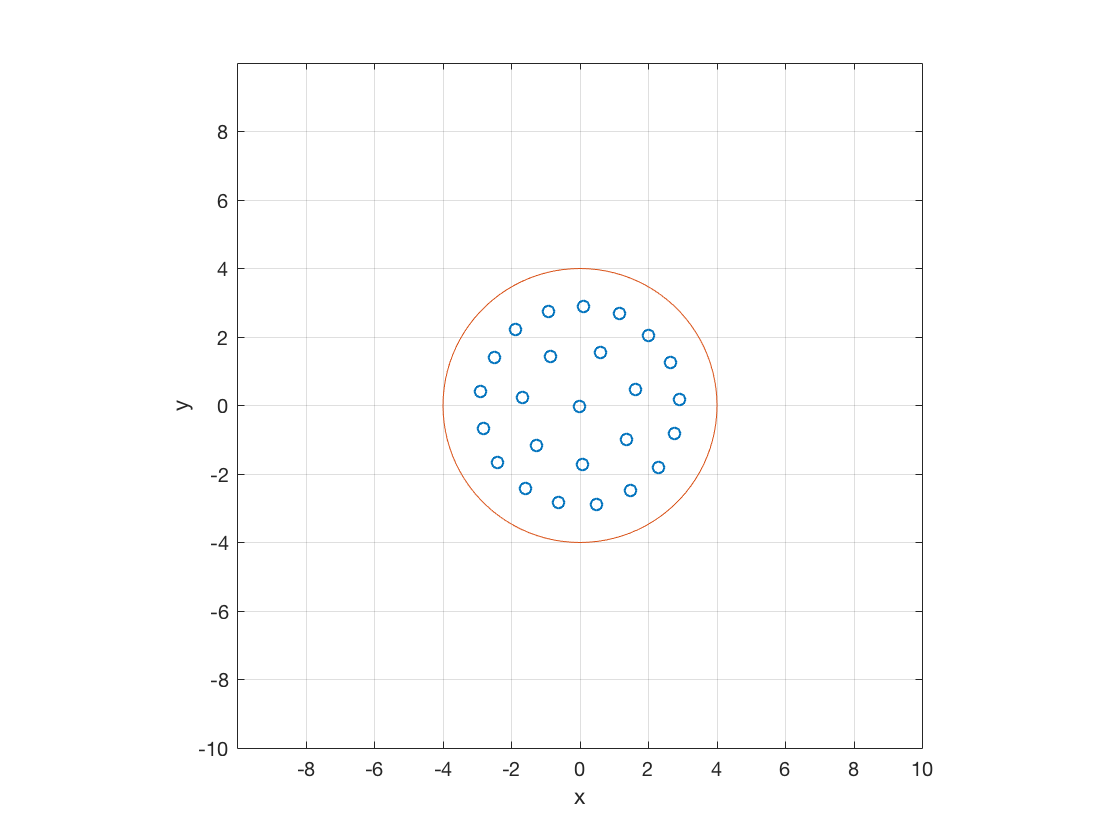}
\label{Fig1d}}
}
\centering\caption{Various pattern formation for different number of particles.}
\label{Fig1}
\end{figure}

\subsection{Energy decay}
Figure \ref{Fig2} presents evolution of the kinetic, potential and total energy for each considered models with the same initial condition. Here, we use the same initial distribution as in Section \ref{numpos}. As expected in Propositions \ref{P2.1} and \ref{subseq}, the kinetic energy ${\mathcal E}_k$ decays to 0 and the total energy converges to a limit value. We note here that the simplified CSB model shows more oscillatory behavior than the original one, which implies that the additional term in original CSB model is somehow related to preventing oscillation.

\begin{figure}[h!]
\mbox{
\subfigure[Original CSB, regular kernel]{\includegraphics[width=0.5\textwidth]{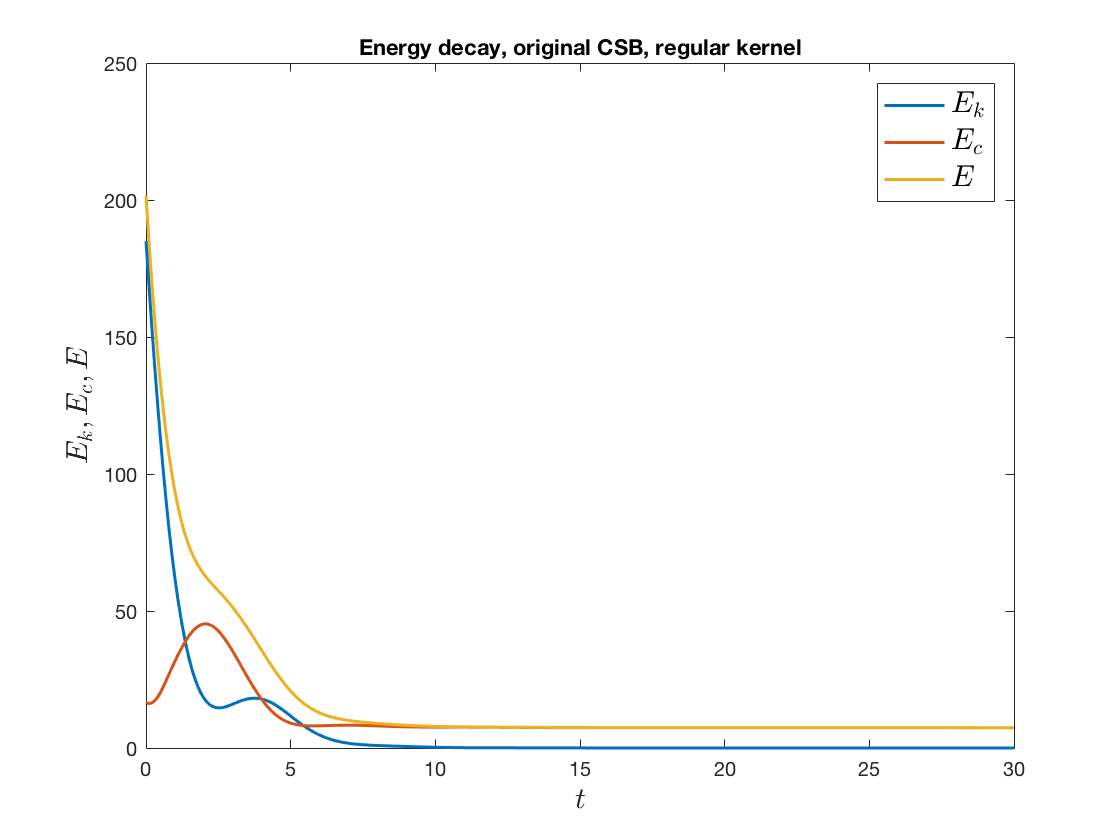}
\label{Fig2a}}
\subfigure[Original CSB, singular kernel]{\includegraphics[width=0.5\textwidth]{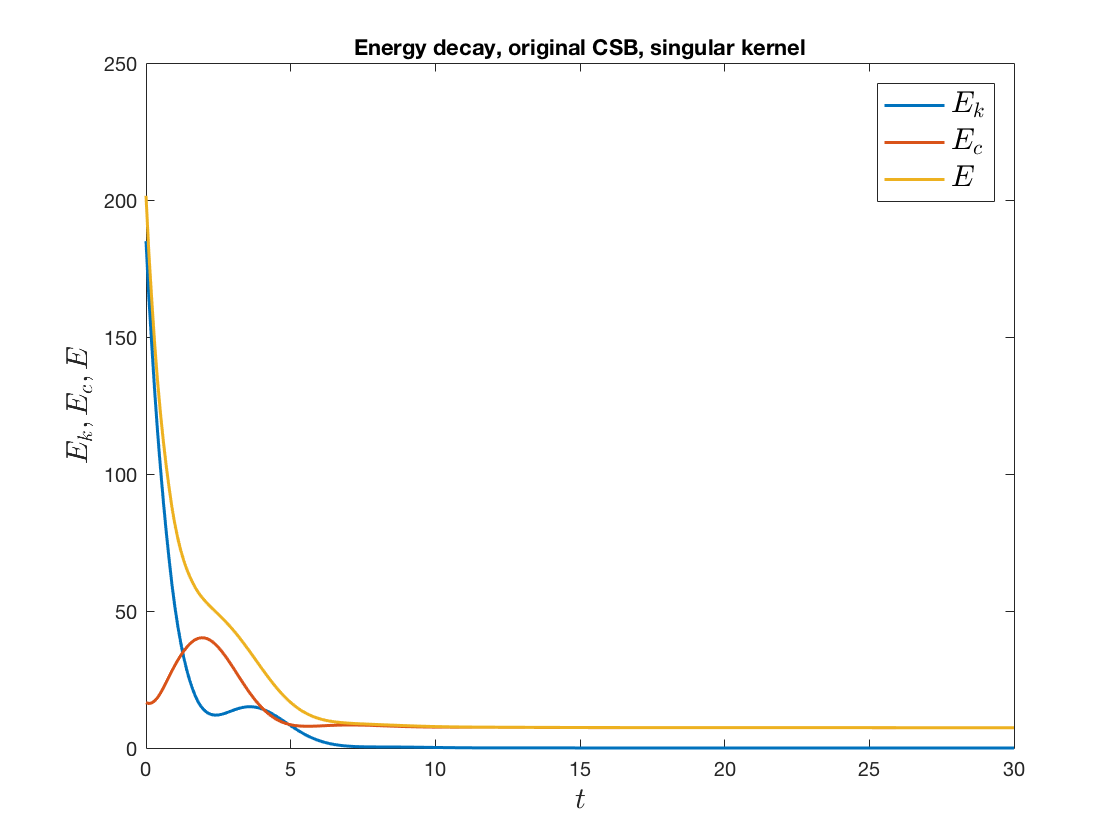}
\label{Fig2b}}
}
\mbox{
\subfigure[Simplified CSB, regular kernel]{\includegraphics[width=0.5\textwidth]{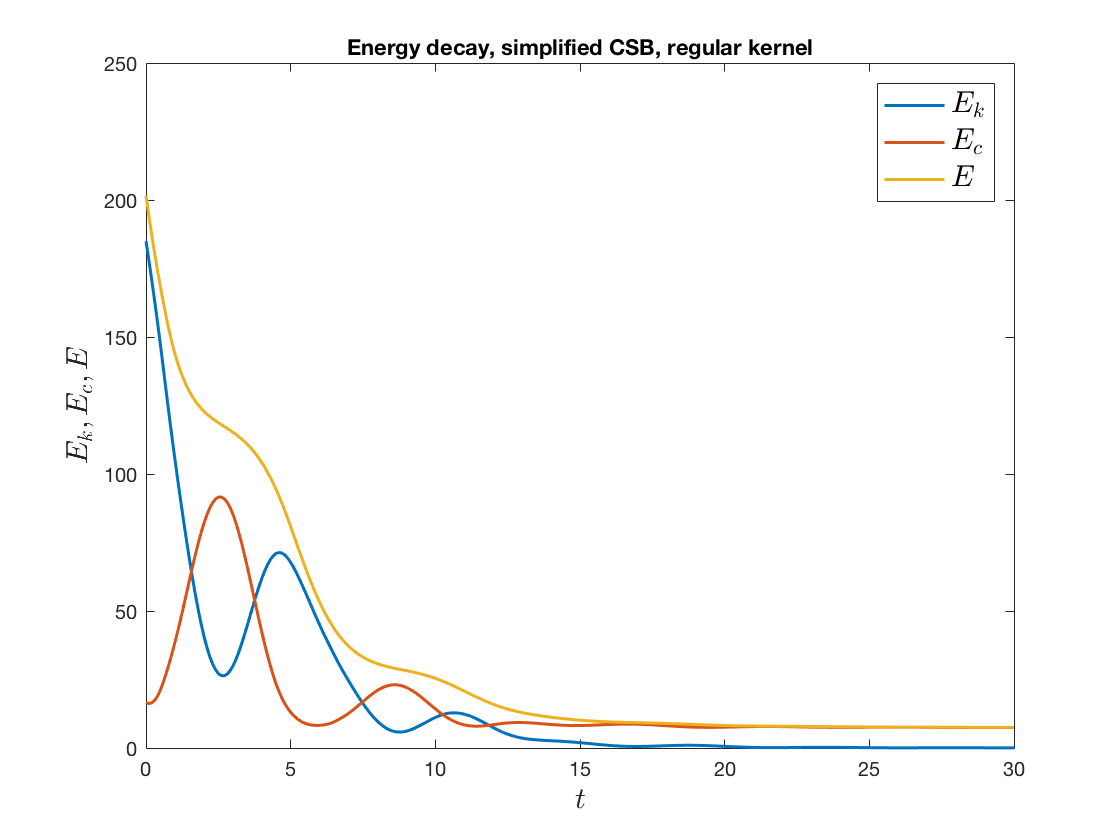}
\label{Fig2c}}
\subfigure[Simplified CSB, singular kernel]{\includegraphics[width=0.5\textwidth]{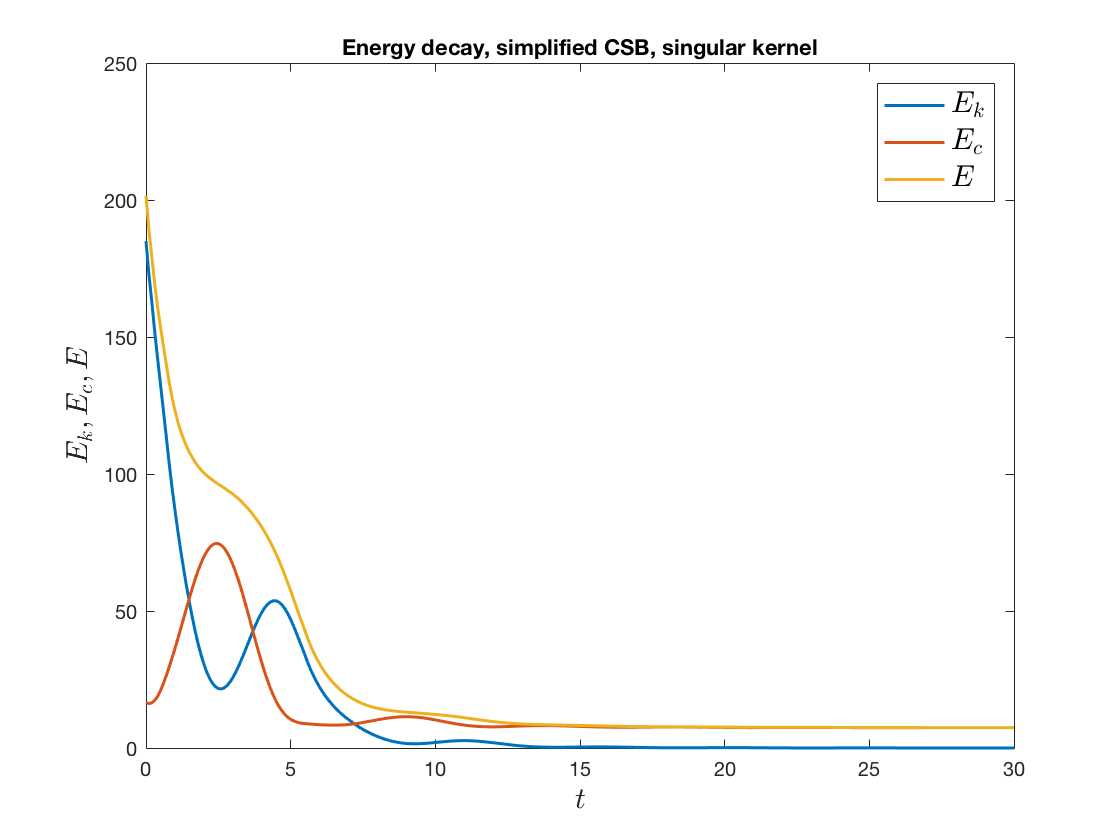}
\label{Fig2d}}
}
\centering\caption{Energy decay of various CSB model.}
\label{Fig2}
\end{figure}

\subsection{Lower bound for distance between particles}
We pick a one-dimensional initial position uniformly from $[-5,5]$ and initial velocity from $[-2,2]$. To compare the singular and regular case, we take two interaction kernels $\psi_{s}$ and $\psi_{r}$ as follows:
\[\psi_{s}(r):=\frac{1}{r},\quad \mbox{and}\quad \psi_{r}(r):=\frac{1}{1+r}.\]

\begin{figure}[h!]
\centering
\mbox{
\subfigure[Particle trajectory for singular kernel]{\includegraphics[width=0.45\textwidth]{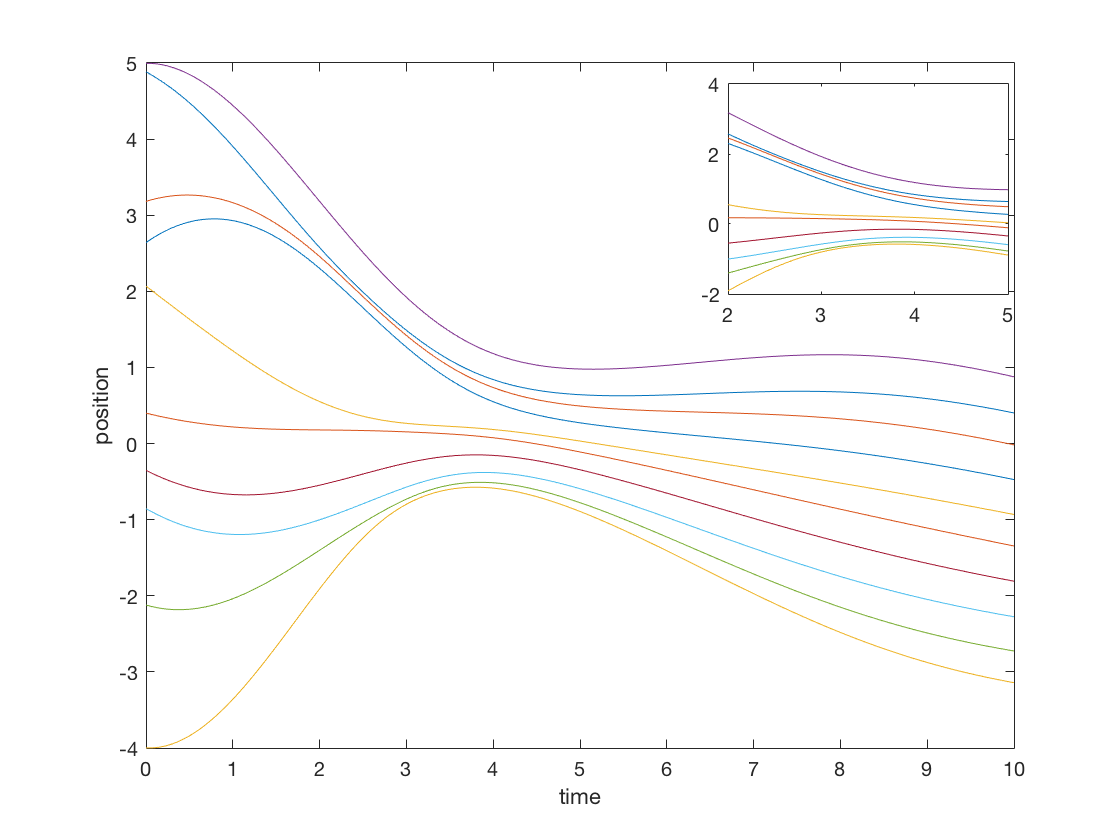}
\label{Fig3a}}
\hspace{0.4cm}
\subfigure[Particle trajectory for regular kernel]{\includegraphics[width=0.45\textwidth]{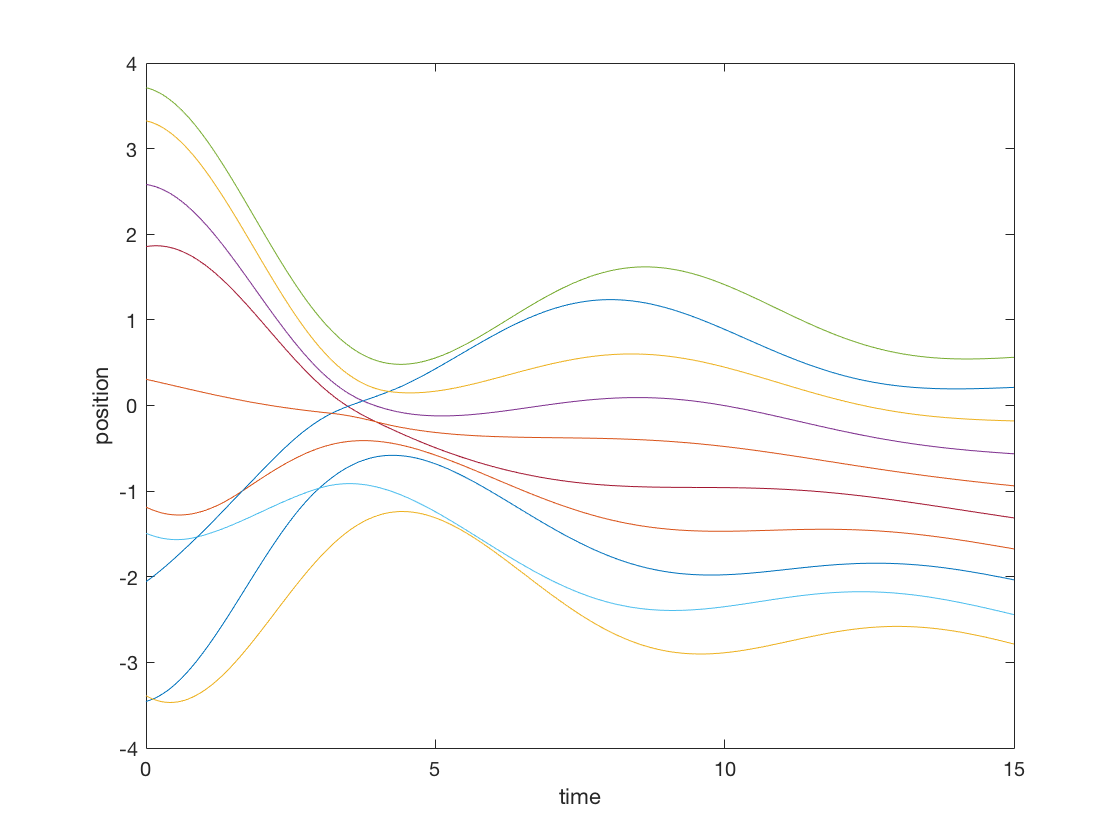}
\label{Fig3b}}
}
\centering\caption{Particle trajectories for (a) singular and (b) regular kernels in 1-dimension.}
\label{Fig3}
\end{figure}

Figure \ref{Fig3a} shows the numerical result for the singular kernel $\psi_{s}$. Here, we find that there is a minimum relative distance attained near $t=3$. However, for the case of regular kernel in Figure \ref{Fig3b}, there are collisions between particles, and the lower bound of relative positions between the particles can only be attained in the asymptotic sense.\\

\subsection{Convergence of position}
Finally we measure the magnitude of velocity to check whether the particles converge to equilibrium. Figure \ref{Fig5} shows the maximum velocity $v_{max}$ defined as
\[v_{max}(t):=\max_{1\le i\le N} |v_i(t)|.\]
In the figure, we multiply $v_{max}$ by $t^{1.5}$ in order to show the clear meaning of the graph. In Figure \ref{Fig5}, one can find that the function $t^{1.5}v_{max}$ is bounded from the above and decays to 0 over time. This implies that there exists a positive constant $C>0$ such that
\[v_{max}\le \frac{C}{t^{1.5}},\]
which further implies the convergence of position, since velocity is absolutely integrable. Therefore, although we cannot provide analytically rigorous proof of convergence of position, we observe that positions of particles converge to the equilibrium.
\begin{figure}[h!]
\centering
\includegraphics[width=0.9\textwidth]{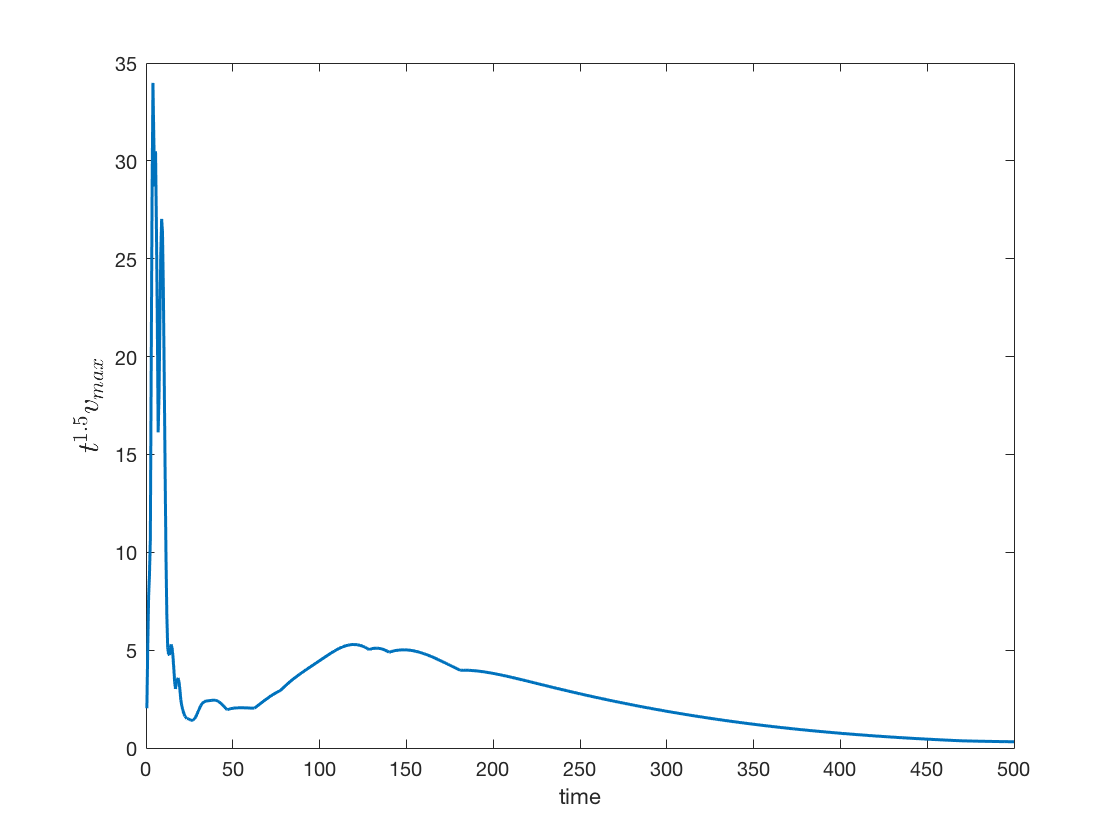}
\caption{Magnitude of $v_{max}$ versus time.} 
\label{Fig5}
\end{figure}

\bigskip

\bibliographystyle{amsplain}
{\footnotesize

}
\end{document}